\newtheorem{thm}{Theorem}[section]
\newtheorem{lem}[thm]{Lemma}
\newtheorem{prop}[thm]{Proposition}
\newtheorem{cor}[thm]{Corollary}
\theoremstyle{definition}
\newtheorem{defn}[thm]{Definition}
\numberwithin{equation}{section}
\numberwithin{figure}{section}
\newcommand{\Z}{\mathbb{Z}}
\newcommand{\K}{\mathcal{K}}
\newcommand{\C}{\mathcal{C}}
\newcommand{\cL}{\mathcal{L}}
\newcommand{\R}{\mathcal{R}}
\title[Injectivity of satellite operators in knot concordance]{Injectivity of satellite operators in knot concordance}
\author{Tim D. Cochran$^{\dag}$}
\address{Department of Mathematics MS-136, P.O. Box 1892, Rice University, Houston, TX 77251-1892}
\email{cochran@rice.edu}
\author{Christopher William Davis}
\address{Department of Mathematics,Hibbard Humanities Hall 508, University of Wisconsin-Eau Claire, WI 54702-4004}
\email{daviscw@uwec.edu}
\author{Arunima Ray}
\address{Department of Mathematics MS-136, P.O. Box 1892, Rice University, Houston, TX 77251-1892}
\email{arunima.ray@rice.edu}
\thanks{$^{\dag}$Partially supported by the National Science Foundation  DMS-1006908}
\subjclass[2000]{Primary 57M25}
\begin{document}
\date{\today}
\begin{abstract}  Let $P$ be a knot in a solid torus, $K$ a knot in $S^3$ and $P(K)$ the satellite knot of $K$ with pattern $P$.  This defines an operator $P:\mathcal{K}\to\mathcal{K}$ on the set of knot types and induces a satellite operator $P:\mathcal{C}\to \mathcal{C}$ on the set of smooth concordance classes of knots. There has been considerable interest in whether certain such functions are injective. For example, it is a famous open problem whether the Whitehead double operator is \textbf{weakly injective} (an operator is called weakly injective if $P(K)=P(0)$ implies $K=0$ where $0$ is the class of the trivial knot). We prove that, modulo the smooth  $4$-dimensional Poincar\'{e} Conjecture, any \textbf{strong winding number one} satellite operator is injective on $\C$. More precisely, if $P$ has strong winding number one and $P(K)=P(J)$ then $K$ is smoothly concordant to $J$ in $S^3\times [0,1]$ equipped with a possibly exotic smooth structure. We also prove that any strong winding number one operator is injective on the topological knot concordance group. If $P(0)$ is unknotted then strong winding number one is the same as (ordinary) winding number one. More generally we show that \textit{any}  satellite operator with non-zero winding number $n$ induces an injective function on the set of $\Z[\frac{1}{n}]$-concordance classes of knots. We deduce some analogous results for links.
\end{abstract}

\maketitle

\section{Introduction}\label{sec:intro}
The satellite construction is a classical procedure that transforms an oriented knot $K$ in $S^3$ to another knot.  Suppose $P$ is an oriented knot  in the  solid torus $ST\equiv S^1\times D^2$, called a \textit{pattern knot}. An example is shown in Figure~\ref{fig:satelliteexample}. For any oriented knot  $K$ in $S^3$ we denote by $P(K)$ the (untwisted) \textbf{satellite} of  $K$ obtained by using $P$ as a \textbf{pattern}  ~\cite[p. 10]{Lick2}.  Precise definitions are given in Section~\ref{sec:satellitesandwinding}. Each such pattern may thus be viewed as a function $P:\mathcal{K}\to\mathcal{K}$ on the set of isotopy classes of knots. These descend to  yield functions, called \textbf{satellite operators}, on $\mathcal{K}/\sim$ for various other important equivalence relations, in particular  on the set of concordance classes of knots. We will establish the injectivity of these functions in some important cases.

The importance of satellite operations extends far beyond knot theory. Such operations have been generalized to operations on $3$ and $4$-manifolds where they produce very subtle variations while fixing the homology type ~\cite[Sec. 5.1]{Ha2}. In particular winding number one satellites are closely related to Mazur $4$-manifolds ~\cite{AkKi3} which in turn are closely related to Akbulut \textit{corks}. The latter are contractible $4$-manifolds that can be used to alter the smooth structure on $4$-manifolds (by removing them and reinserting them with a twist). Specifically, a knot $K$ may occur as the attaching circle of a $2$-handle in the handlebody description of a $4$-manifold. It was shown, for example, in ~\cite{AkYa1} that, for the simplest strong winding number one operators $P$, the modification of the handlebody effected by $K\rightsquigarrow P(K)$ can alter the smooth structure on the $4$-manifold without altering the homeomorphism type !

We will, in fact, consider four different ``concordance'' equivalence relations on $\mathcal{K}$, with the sets of equivalence classes being denoted $\C$, $\C^{ex}$, $\C^{top}$, and $\C^{\frac{1}{n}}$ respectively. Here $\C$ denotes the (usual) set of \textbf{smooth knot concordance} classes wherein $K_0\hookrightarrow S^3\times\{0\}$ is equivalent to $K_1\hookrightarrow S^3\times\{1\}$ if there exists a properly, smoothly embedded annulus in $S^3\times [0,1]$ which restricts on its boundary to the given knots. $\C^{top}$ is the (usual) set of \textbf{topological knot concordance} classes wherein $K_0\hookrightarrow S^3\times\{0\}$ is equivalent to $K_1\hookrightarrow S^3\times\{1\}$ if there exists a collared proper topological embedding of an annulus into a topological manifold homeomorphic to $S^3\times [0,1]$ which restricts on its boundary to the given knots. $\C^{ex}$, short for $\C^{exotic}$, is the set of equivalence classes of knots where two are equivalent if they cobound a properly, smoothly embedded annulus in a smooth manifold \textit{homeomorphic} to $S^3\times [0,1]$; that is, they are concordant in $S^3\times [0,1]$ equipped with a possibly exotic smooth structure. This has been called \textit{pseudo-concordance} by some authors ~\cite{Boyer1}~\cite[Def. 2]{SatoY}. If the smooth $4$-dimensional Poincar\'{e} Conjecture is true then $\C^{ex}=\C$. Finally, for a fixed non-zero integer $n$,  $\C^{\frac{1}{n}}$ denotes the set of equivalence classes of knots in $S^3$ where two  are equivalent if they cobound a smoothly embedded annulus in a smooth $4$-manifold that is a $\Z[\frac{1}{n}]$-homology $S^3\times [0,1]$. For odd $n$ it seems to be unknown whether $\C=\C^{\frac{1}{n}}$! The latter could also be considered in the topological category and our results hold, but we suppress this.  For economy we will  use the notation $\C^{*}$ to denote $*=top$, $*=ex$ or $*=\frac{1}{n}$, reserving the notation $\C$ for the smooth knot concordance group. It is easy to see that each of these is an abelian group under connected sum. In each case the identity is the class of the trivial knot $U$, and the inverse of $K$, denoted $-K$, is the reverse of the mirror image of $K$, denoted $r\overline{K}$.  If $K=0=U$ in $\C$ (respectively: $\C^{ex}, \C^{top}, \C^{\frac{1}{n}})$ then $K$ is called a (smooth) \textbf{slice knot} (respectively: pseudo-slice, topologically slice, $\Z[\frac{1}{n}]$-slice). This is equivalent to saying that $K$ bounds a smoothly embedded disk in a manifold diffeomorphic to $B^4$ (respectively: bounds a smoothly embedded disk in an exotic $B^4$, bounds a collared, topologically embedded disk in a manifold  homeomorphic to $B^4$, bounds a smoothly embedded disk in a smooth manifold that is $\Z[\frac{1}{n}]$-homology equivalent to $B^4$).

We are interested in whether such satellite operators are injective functions (beware they are not homomorphisms). Call such an operator \textbf{weakly injective} if $P(K)=P(0)$ implies $K=0$ (here $0$ is the class of the trivial knot). It is a long-standing open problem whether the Whitehead double operator is weakly injective on $\C$ ~\cite[Problem 1.38]{Kirbyproblemlist}. Considerable effort has been expended in providing evidence for this conjecture  (see ~\cite{HedKirk2} for a survey and the most recent results).  There has recently been speculation that many other ``non-trivial'' satellite operators are injective on $\C$. In ~\cite{CHL5} large classes of winding number zero operators called ``robust doubling operators'' were introduced and evidence was presented for their injectivity.  Yet no single ``non-trivial'' operator is known to be even weakly injective (the exception being the degenerate ``connected-sum operator''  which arises when $P$ intersects the meridional disk of $ST$ in a single point).

Here we have more success for non-zero winding number operators, especially winding-number $\pm 1$ operators. In fact in this paper we will need a stronger version of winding number $\pm 1$. By viewing $ST$ as the standard unknotted solid torus in $S^3$, we arrive at another knot in $S^3$ via $P\hookrightarrow ST\hookrightarrow S^3$. This knot will be denoted by $\widetilde{P}$. Note that $\widetilde{P}=P(U)$ where $U$ is the trivial knot.  For example, for the $P$ shown in Figure~\ref{fig:satelliteexample}, $\widetilde{P}$ is the trivial knot.  The \textbf{winding number} of $P$ is the algebraic intersection number of $P$ with a meridional disk of $ST$.   Let $\eta$ denote the oriented meridian of $ST$, $\{1\}\times \partial D^2$.  The condition that a pattern $P$ has winding number $\pm 1$ is equivalent to the condition that $\eta$  generates $H_1(S^3-\widetilde{P})$.

\begin{defn}\label{def:strongwinding} The pattern $P$ has \textbf{strong winding number} $\pm 1$ if the meridian of the solid torus $ST$ normally generates $\pi_1(S^3-\widetilde{P})$.
\end{defn}

The example in Figure~\ref{fig:satelliteexample} has strong winding number one. Our main theorem is:

\newtheorem*{thm:mainresult}{Theorem~\ref{thm:mainresult}}
\begin{thm:mainresult} Suppose $P$ is a pattern with non-zero winding number $n$. Then
\begin{itemize}
\item [a.]  $P:\C^{\frac{1}{n}}\to \C^{\frac{1}{n}}$ is an injective function.\\
Suppose that $P$ is a pattern with strong winding number $\pm 1$. Then 
\item [b.]  $P:\C^{ex}\to \C^{ex}$ is an injective function,
\item [c.]   $P:\C^{top}\to\C^{top}$ is an injective function, and
\item [d.]  if $S^4$ has a unique smooth structure (up to diffeomorphism) then $P:\C\to\C$ is an injective function.
\end{itemize}
\end{thm:mainresult}

This establishes that the sets $\C^*$  admit many natural self-similarities (as conjectured in ~\cite{CHL5}) ~\cite[Def. 3.1]{BGN}. 

Restricting part  $a.$ of the theorem to the case $n=1$ yields the following simple result:

\newtheorem*{cor:mainresult1}{Corollary~\ref{cor:mainresult1}}
\begin{cor:mainresult1} Suppose $P$ is a pattern with winding number $\pm 1$. Then $P(K)$ is smoothly concordant to $P(J)$ in a smooth homology $S^3\times [0,1]$ if and only if $K\#-J$ is smoothly slice in a smooth homology $B^4$.
\end{cor:mainresult1}

Similarly, restricting part $a.$ to cable operations yields:

\newtheorem*{cor:mainresult2}{Corollary~\ref{cor:mainresult2}}
\begin{cor:mainresult2}  If $p$ and $q$ are coprime positive integers then the $(p,q)$ cable of $K$ is  smoothly concordant to the $(p,q)$ cable of $J$ in a smooth $\Z[\frac{1}{p}]$-homology $S^3\times [0,1]$ if and only if $K$ is smoothly concordant to $J$ in a smooth $\Z[\frac{1}{p}]$-homology $S^3\times [0,1]$.
\end{cor:mainresult2}

The case $p=2$ of Corollary~\ref{cor:mainresult2}  was proved previously by the third author and indeed was one of the inspirations for the current paper ~\cite{Aru1}. The current paper also owes a substantial debt to the techniques of ~\cite{CFHH}. Our techniques are elementary. We use only basic topology and handlebody techniques, except for our use of Freedman's proof of the $4$-dimensional topological Poincar\'{e} Conjecture.

In Section~\ref{sec:stringlinks} we extend some of our results to links. In Section~\ref{sec:questions} we pose a few questions.

\section{Satellite knots and strong winding number one patterns}\label{sec:satellitesandwinding}

In this section we review the formal definition of satellite operators and collect a few elementary properties of satellite operators that we will need in the proof of the main theorem. We also investigate the concept of strong winding number $\pm 1$ and indicate how many operators have this property.

Let $ST\equiv S^1\times D^2$ where both $S^1$ and $D^2$ have their usual orientations. We will always think of $ST$ as embedded in $S^3$ in the standard unknotted fashion. Suppose $P\subset ST$ is an embedded oriented circle that is geometrically essential (even after isotopy $P$ has non-trivial intersection with a meridional $2$-disk). Suppose $K$ is an oriented knot in $S^3$ given as the image of the embedding $f_K:S^1\to S^3$. Then there is an orientation-preserving  diffeomorphism $\tilde{f}_K:S^1\times D^2\to N(K)$, where $N(K)$ is a tubular neighborhood of $K$, such that $\tilde{f}_K=f$ on $S^1\times\{0\}$ and $\tilde{f}_K$ takes the oriented meridian $\eta$ of $ST$ to the oriented meridian of $K$, and takes a preferred  longitude of $ST$, $S^1\times \{1\}$, to a preferred oriented longitude of $K$. The (oriented) knot type of the image of $P$ under $\tilde{f}_K: ST \to N(K)\hookrightarrow S^3$ is called the (untwisted) \textbf{satellite} of $K$ with \textbf{pattern knot} $P$  ~\cite[p. 10]{Lick2}. This will be denoted $P(K)$. 
\begin{figure}[htbp]
\setlength{\unitlength}{1pt}
\begin{picture}(252,200)
\put(50,0){\includegraphics{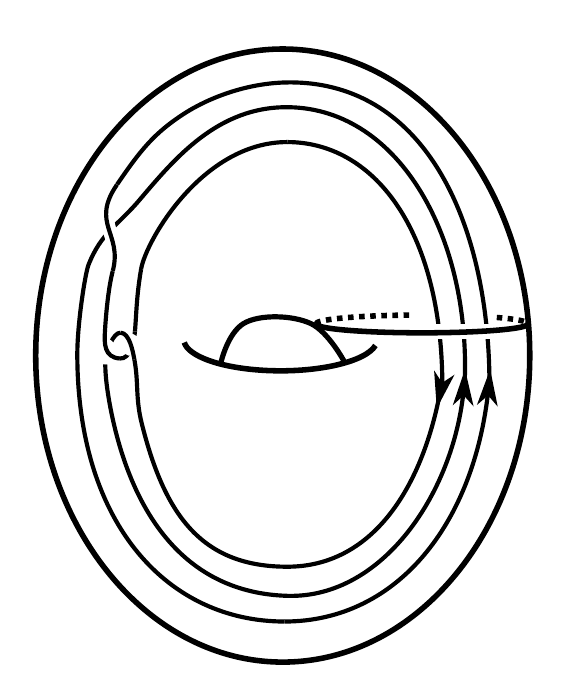}}
\put(214,108){$\eta$}
\put(120,45){$P$}
\end{picture}
\caption{A strong winding number one pattern $P$}
\label{fig:satelliteexample}
\end{figure}
In this paper $P$ will denote, depending on the context, either a knot in the solid torus or the corresponding induced function on a set of equivalence classes of knots in $S^3$:
$$
P:\K/\sim\to \K/\sim
$$
given by $K\mapsto P(K)$.  Such functions seem rarely to be additive with respect to the monoidal structure on $\K$ given by connected sum.  It is well known that satellite functions descend to  yield what we call \textbf{satellite operators}, on $\mathcal{K}/\sim$ for various important equivalence relations on knots, In particular any such operator descends to  $P:\mathcal{C}^*\to \mathcal{C}^*$ on the various sets of ``concordance classes of knots'' as defined in Section~\ref{sec:intro}. For a fixed pattern knot $P$, we will use the same notation for each of these satellite operators.

We present a few elementary results to indicate that strong winding number $\pm 1$ patterns are plentiful. 

\begin{prop}\label{prop:unknottedwindingone} If $P$ is a pattern with $\widetilde{P}$ unknotted then strong winding number $\pm 1$ is equivalent to winding number $\pm 1$.
\end{prop}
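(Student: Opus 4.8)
The plan is to prove the two implications separately, and to note at the outset that only one of them uses the hypothesis on $\widetilde{P}$.

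\textbf{Strong winding number $\pm 1$ $\Rightarrow$ winding number $\pm 1$.} This direction needs no hypothesis on $\widetilde{P}$. Suppose $\eta$ normally generates $\pi_1(S^3-\widetilde{P})$. The abelianization epimorphism $\pi_1(S^3-\widetilde{P})\onto H_1(S^3-\widetilde{P})$ carries the normal closure of $\eta$ onto the subgroup generated by the image of $\eta$; since the target is abelian, that subgroup is all of $H_1(S^3-\widetilde{P})$. Hence $[\eta]$ generates $H_1(S^3-\widetilde{P})$, which by the characterization recalled just before Definition~\ref{def:strongwinding} is exactly the statement that $P$ has winding number $\pm 1$.

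\textbf{Winding number $\pm 1$ $\Rightarrow$ strong winding number $\pm 1$, when $\widetilde{P}$ is unknotted.} Here we use that the exterior $S^3-N(\widetilde{P})$ of the unknot is a solid torus, hence homotopy equivalent to $S^1$; in particular $\pi_1(S^3-\widetilde{P})\cong\Z$ and the Hurewicz homomorphism $\pi_1(S^3-\widetilde{P})\to H_1(S^3-\widetilde{P})\cong\Z$ is an isomorphism. Winding number $\pm 1$ means $[\eta]$ generates $H_1(S^3-\widetilde{P})$; transporting back through the Hurewicz isomorphism shows that $\eta$ generates $\pi_1(S^3-\widetilde{P})\cong\Z$. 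A generator of a group trivially normally generates it, so $\eta$ normally generates $\pi_1(S^3-\widetilde{P})$, i.e.\ $P$ has strong winding number $\pm 1$.

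The only small bookkeeping point worth recording is that $\eta=\{1\}\times\partial D^2$ is disjoint from $\widetilde{P}$ and bounds the meridional disk of $ST$, so its class in $H_1(S^3-\widetilde{P})\cong\Z\langle\mu\rangle$ is $n\cdot\mu$, where $n$ is the winding number and $\mu$ is a meridian of $\widetilde{P}$; this is what identifies ``$[\eta]$ generates $H_1$'' with $n=\pm 1$. I expect no real obstacle: the entire content of the proposition is that the complement of the unknot has infinite cyclic fundamental group, so for patterns with $\widetilde{P}$ unknotted there is no gap between $\eta$ generating $H_1$ and $\eta$ normally generating $\pi_1$.
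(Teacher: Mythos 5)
Your proof is correct and follows essentially the same route as the paper: the substantive direction uses exactly the paper's observation that the unknot complement has $\pi_1\cong H_1\cong\Z$, so winding number $\pm 1$ forces $\eta$ to generate (hence normally generate) $\pi_1(S^3-\widetilde{P})$. The converse direction you spell out via abelianization is left implicit in the paper (it follows immediately from the remark preceding Definition~\ref{def:strongwinding}), so your write-up is just a slightly more complete version of the same argument.
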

\begin{proof} If  $\widetilde{P}$ is  unknotted, $\pi_1(S^3-\widetilde{P})\cong H_1(S^3-\widetilde{P})$. Thus if $P$ has winding number $\pm 1$ then $\eta$ generates $\pi_1(S^3-\widetilde{P})$, so $P$ has strong winding number $\pm 1$.
\end{proof}

\begin{cor}\label{cor:linkstrong} Any two component link with linking number $\pm 1$ and each component unknotted corresponds to a pattern with strong winding number $\pm 1$.
\end{cor}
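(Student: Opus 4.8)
The plan is to manufacture the required pattern directly out of the link and then quote Proposition~\ref{prop:unknottedwindingone}. Write $L = L_1 \cup L_2 \subset S^3$ for the given link, so that $\lk(L_1,L_2) = \pm 1$ and both $L_1$ and $L_2$ are unknotted. First I would use the hypothesis on $L_2$: since $L_2$ is unknotted, its exterior $ST := S^3 - N(L_2)$ is a solid torus whose complementary piece $N(L_2)$ is itself an unknotted solid torus, so $ST$ is embedded in $S^3$ in the standard unknotted fashion in the sense of Section~\ref{sec:satellitesandwinding}. The meridian $\eta$ of $ST$ — the curve on $\partial N(L_2)$ bounding a disk in $ST$ — is precisely the $0$-framed longitude $\lambda_2$ of $L_2$, and the preferred longitude of $ST$ is the meridian $\mu_2$ of $L_2$; thus the framing conventions built into $ST$ agree with those used in the satellite construction.

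Next I would set $P := L_1$, viewed as a knot in $ST = S^3 - N(L_2)$. Under the standard inclusion $ST \hookrightarrow S^3$ this is just the inclusion $S^3 - N(L_2) \hookrightarrow S^3$, so $\widetilde P = P(U)$ is isotopic in $S^3$ to $L_1$, which is unknotted by hypothesis. To compute the winding number, note that a meridional disk of $ST$ is bounded by $\eta = \lambda_2$, so the algebraic intersection of $P$ with such a disk equals $\lk(L_1,\lambda_2) = \lk(L_1,L_2) = \pm 1$; equivalently $[L_1]$ generates $H_1(ST)\cong\Z$. In particular this intersection number is nonzero, so $P$ is geometrically essential and is a genuine pattern.

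Finally, since $\widetilde P = L_1$ is unknotted and $P$ has winding number $\pm 1$, Proposition~\ref{prop:unknottedwindingone} applies verbatim and shows that $P$ has strong winding number $\pm 1$, which is the assertion. I do not expect a serious obstacle here: the only place needing care is the bookkeeping in the first paragraph, namely confirming that $S^3 - N(L_2)$ is a \emph{standard} solid torus and that its meridian–longitude pair is the one the satellite conventions demand. This is routine once one recalls that the exterior of an unknot is a solid torus and that the $0$-framed longitude of an unknot bounds a disk in that exterior.
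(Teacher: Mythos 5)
Your argument is correct and is essentially the paper's own proof: view the exterior $S^3-N(L_2)$ of the unknotted component as the standard solid torus containing $L_1$, note the winding number equals $\lk(L_1,L_2)=\pm 1$, and apply Proposition~\ref{prop:unknottedwindingone}. The extra bookkeeping you do (identifying the meridian of $ST$ with the $0$-framed longitude of $L_2$ and checking geometric essentiality) is accurate but not needed beyond the paper's two-line version.
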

\begin{proof} If $(\widetilde{P},\eta)$ is such a link then $S^3-N(\eta)$ is a solid torus containing $\widetilde{P}$ and hence defines a pattern of winding number $\pm 1$. Then apply Proposition~\ref{prop:unknottedwindingone}.
\end{proof}

Many other examples of strong winding number one patterns have appeared in the literature in the context of the study of Mazur manifolds (see for example ~\cite[Figure 2]{AkKi3}).

But what if $J=\widetilde{P}$ is knotted? Are there many examples of pairs $(J,\eta)$ where $\eta$ is a normal generator of $\pi_1(S^3-J)$? In his thesis and later in ~\cite{Tsau}, C. Tsau studied this question. He called a  class $\eta$ that normally generates $\pi_1(S^3-J)$ a ``knot killer''.   Note that any such (free) homotopy class has (many) embedded representatives that are unknotted in $S^3$, leading to (presumably distinct) strong winding $\pm 1$ operators.  One way to get knot killers is to take the image of a meridian under an automorphism of $\pi_1$. But if $J$ is prime and not a cable knot or a torus knot then such an automorphism must be induced by a  homeomorphism and hence must send the meridian to a conjugate of itself or its inverse. Thus these so-called ``algebraic knot killers'' are rather restrictive. Tsau proved the existence of a ``non-algebraic knot killer''. This last concept was renamed: \textit{pseudo-meridian},  in ~\cite{SilWhitWill}  where it was shown that the group of any non-trivial torus knot,  $2$-bridge knot or hyperbolic knot with unknotting number one contains infinitely many pseudo-meridians (none equivalent to the other under an automorphism of the group or by conjugation). Thus there exists a large number of strong winding number $\pm 1$ operators.

There is a bijection between satellite operators and ordered, oriented $2$-component links $L=(K_0,K_1)$, for which $K_1$ is unknotted. This is obtained by thinking of the solid torus $ST$ as embedded in $S^3$ in the standard fashion and then setting $(K_0,K_1)=(\widetilde{P},\eta)$.  The following result is important to keep in mind. There is an analogous result in each of the categories.

\begin{prop}\label{prop:sameoperators} If $L_0$ and $L_1$ are two such links that are concordant in $S^3\times [0,1]$ (even with an exotic smooth structure) then the corresponding operators $P_{0}$ and $P_{1}$ are identical functions on $\C^{ex}$.
\end{prop}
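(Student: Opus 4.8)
The plan is to perform the satellite (``infection'') construction that produces $P_i(K)$ one dimension higher, using the given link concordance in place of the link. First I would record the standard reformulation of the satellite operator. Write $E_\eta = S^3\setminus\mathrm{int}\,N(\eta)$ and $E_K = S^3\setminus\mathrm{int}\,N(K)$. Since $\eta$ is unknotted, $E_\eta$ is a solid torus, and the pair $(E_\eta,\widetilde P)$ is precisely the pattern solid torus with its pattern $P$; moreover $S^3$ is recovered as $E_\eta\cup_\phi E_K$, where $\phi\colon\partial N(\eta)\to\partial E_K$ carries the meridian $\mu_\eta$ to the longitude $\lambda_K$ and the longitude $\lambda_\eta$ to the meridian $\mu_K$ (this uses only that gluing a solid torus to $E_K$ along $\mu_K$ gives back $S^3$), and under this identification $\widetilde P$ becomes $P(K)$.

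Now fix a knot $K$ and let $A\sqcup B$ be the given concordance in a smooth manifold $W$ homeomorphic to $S^3\times[0,1]$, with $B$ the annulus from $\eta_0$ to $\eta_1$ and $A$ the annulus from $\widetilde{P_0}$ to $\widetilde{P_1}$. Pick a tubular neighborhood $N(B)=B\times D^2$; one should first observe that its normal framing can be chosen to restrict to the $0$-framing at both $\eta_0$ and $\eta_1$ (an arbitrary trivialization restricts to the $k_i$-framing of $\eta_i$, and since the two push-offs cobound a push-off annulus inside $E_B := W\setminus\mathrm{int}\,N(B)$ and each represents $k_i$ times the meridian of $B$ in $H_1(E_B)$, one gets $k_0=k_1$, which we may normalize to $0$). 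Set $W' := E_B\cup_\phi(E_K\times[0,1])$, glued in each $[0,1]$-slice by the map $\phi$ above. Because the components of a link concordance are disjoint, $A$ misses $N(B)$, so $A$ is a smooth, properly embedded annulus in $W'$, and applying the first paragraph at each end identifies $\partial W'=S^3\sqcup S^3$ with $A$ meeting it exactly in $P_0(K)\sqcup P_1(K)$.

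The crux is to show $W'$ is homeomorphic to $S^3\times[0,1]$; granting this, $W'$ is a possibly exotic $S^3\times[0,1]$ containing a concordance from $P_0(K)$ to $P_1(K)$, so $P_0(K)=P_1(K)$ in $\C^{ex}$, and since $K$ was arbitrary the two operators coincide on $\C^{ex}$. For this I would compute, via van Kampen applied to $W' = E_B\cup_{T^2\times[0,1]}(E_K\times[0,1])$, that $\pi_1(W')=1$: the amalgamated product identifies the meridian $\mu_B$ with $\lambda_K$ and the longitude $\lambda_B$ with $\mu_K$; the unknottedness of $\eta_0$ makes $\lambda_B$ null-homotopic in $E_B$, so $\mu_K$ dies, hence (as $\mu_K$ normally generates $\pi_1(E_K)$) all of $\pi_1(E_K)$ dies, hence $\lambda_K$ dies, hence $\mu_B$ dies, hence (as $\mu_B$ normally generates $\pi_1(E_B)$, which one sees by filling $B$ back into $E_B$ to recover the simply connected $W$) all of $\pi_1(E_B)$ dies. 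A parallel Mayer--Vietoris computation, after first checking $H_*(E_B)\cong H_*(S^1)$ from the long exact sequence of $(W,E_B)$ together with excision $H_*(W,E_B)\cong H_*(S^1\times D^2, S^1\times\partial D^2)$ and $H_*(W)\cong H_*(S^3)$, gives $H_*(W')\cong H_*(S^3)$ with each boundary $S^3$ including by an isomorphism on $H_3$. Hence $W'$ is a simply connected homology cobordism between $3$-spheres, so an $h$-cobordism by Hurewicz and Whitehead, and Freedman's topological $h$-cobordism theorem (the trivial group being good) gives $W'\cong S^3\times[0,1]$.

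I expect the main obstacle to be the topology of the annulus exterior $E_B$ --- specifically verifying $H_*(E_B)\cong H_*(S^1)$ and that $\pi_1(E_B)$ is normally generated by the meridian of $B$ --- and then carefully threading these facts through the van Kampen and Mayer--Vietoris computations for $W'$, along with the (easy but necessary) check that the construction really restricts to the honest $3$-dimensional satellite at each end; the appeal to Freedman's theorem is the only non-elementary ingredient.
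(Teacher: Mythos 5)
Your proof is essentially the paper's: one replaces a suitably framed tubular neighborhood of the annulus joining $\eta_0$ to $\eta_1$ by $(S^3-K)\times[0,1]$, notes that the other annulus then becomes a smooth concordance from $P_0(K)$ to $P_1(K)$, and checks that the ambient manifold is homeomorphic to $S^3\times[0,1]$; the paper leaves this last check to the reader, and your framing normalization, van Kampen and Mayer--Vietoris computations carrying it out are correct. The only adjustment needed is the final citation: what you produce is a $4$-dimensional h-cobordism between copies of $S^3$ (Freedman's h-cobordism theorem concerns $5$-dimensional cobordisms between $4$-manifolds), so one should instead cap one end with $B^4$, invoke Freedman's theorem that a compact contractible $4$-manifold with boundary $S^3$ is homeomorphic to $B^4$, and remove the cap using the $4$-dimensional annulus theorem of Quinn (equivalently, the standard fact that a simply connected homology cobordism between $3$-spheres is topologically a product).
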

\begin{proof} Suppose $A_0$ and $A_1$ are disjointly, smoothly embedded annuli in $S^3\times [0,1]$ (possibly with an exotic smooth structure) that exhibit the concordance. Replace a suitable tubular neighborhood of $A_1$ with $(S^3-K)\times [0,1]$. Then the annulus $A_0$ is a smooth concordance between $P_0(K)$ and $P_1(K)$ in 
a smooth $4$-manifold which may be checked to be homeomorphic to $S^3\times [0,1]$. Thus $P_0=P_1$ as functions on $\C^{ex}$.
\end{proof}

Thus if every $2$-component link with linking number one and one component unknotted were concordant to the Hopf link (equal in $\C^*$)  then Proposition~\ref{prop:sameoperators} would show that the resulting operator would be equal to that of the Hopf link. Since  a Hopf link with a local knot tied in the first component corresponds to the ``connected sum''  operator (the identity operator for the Hopf link itself), and since the connected sum operator is clearly injective, our main result for strong winding number one operators would follow. Therefore it is important to note that there are many such links that are not concordant to the Hopf link as evidenced by several recent papers ~\cite{ChaKimRubSt, FrPo}.  Thus there exist a very large number of strong winding number $\pm 1$ operators that are 
(presumably) distinct from the trivial ``connected-sum'' operator. Moreover there are more subtle invariants from Heegard Floer homology and Khovanov homology that have been used in the smooth category to show that even the simplest winding number one operators, $P$, are not equivalent to the connect-sum operator since there are knots for which $P(K)$ and $K$ have different $\tau$ invariants ~\cite[Section 3]{CFHH}. This is also (indirectly) the main point of  
papers such as ~\cite{AkYa1}, namely that even the simplest winding number one satellites change basic invariants and hence are \textit{not} equivalent to a trivial operator or a connected-sum operator. Therefore  Theorem~\ref{thm:mainresult} has significant content.

Henceforth we will adopt a more schematic representation of a general satellite knot as shown in Figure~\ref{fig:schematicpattern}. 
\begin{figure}[htbp]
\setlength{\unitlength}{1pt}
\begin{picture}(152,120)
\put(15,0){\includegraphics{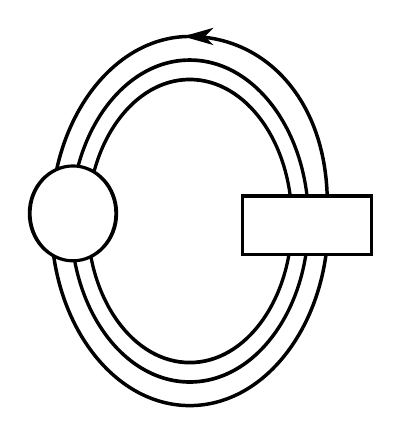}}
\put(32,61){$P$}
\put(98,57){$K$}
\end{picture}
\caption{P(K)}
\label{fig:schematicpattern}
\end{figure}
Such a picture will be used to represent an \textit{arbitrary} satellite knot $P(K)$ for a pattern $P$ of \textit{arbitrary} winding number. The $P$ inside a round disk should be thought of as an arbitrary tangle which closes up to be connected. The knot $K$ inside a rectangle may be understood to symbolize the following string link. Suppose $m$ strands pass through the rectangle. Start with an arc in $B^2\times [0,1]$ knotted in the shape of $K$ (a $1$-component string link). Then form $m$ parallel untwisted copies of this knotted arc. This is the string link to be inserted in the rectangle. This is the same as ``tying all the strands into the knot $K$'',  which in turn yields the knot type of $P(K)$.

The following identity is obvious, once the right hand side is properly interpreted. It will be needed in the proof of our main theorem.

\begin{lem}\label{lem:A}  The identity $P(A\#B)=P(A)(B)$ holds in $\mathcal{K}$ and hence in any $\mathcal{K}/\sim$.
\end{lem}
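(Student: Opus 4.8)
The plan is to make the right-hand side $P(A)(B)$ precise via the schematic ``string link'' description of satellites introduced above, and then to observe that the claimed identity is nothing more than the defining property of connected sum carried along by the satellite construction. First I would fix a tubular neighborhood $N(A)$ of $A$ together with the orientation-preserving diffeomorphism $\tilde f_A\colon ST\to N(A)$ used to define the satellite, so that $P(A)=\tilde f_A(P)\subset N(A)\subset S^3$. Choosing coordinates on $ST=S^1\times D^2$ I may assume $P$ meets the meridional disk $\{p\}\times D^2$ transversely in points $q_1,\dots,q_m$ and that, in a small collar $I\times D^2$ of that disk, $P$ is the trivial $m$-string link $\{q_1,\dots,q_m\}\times I$. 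Its image $E:=\tilde f_A(I\times D^2)$ is a ball in $N(A)$ meeting $P(A)$ in $m$ parallel arcs, and $E\cap A=\tilde f_A\big((S^1\times\{0\})\cap(I\times D^2)\big)$ is an unknotted arc in $E$; this ball $E$ is exactly the ``rectangle'' appearing in the schematic picture of $P(A)$, into which the local knot $A$ has been tied. Reading $P(A)(B)$ in the way intended here --- namely as the knot obtained from $P(A)$ by \emph{also} tying $B$ into these $m$ strands --- it is the knot obtained from $P(A)$ by replacing, inside $E$, the $m$ parallel copies of an $A$-shaped arc with $m$ parallel copies of the $1$-string tangle that concatenates an $A$-shaped arc with a $B$-shaped arc.

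Next I would identify $P(A\#B)$ in the same picture. Since the isotopy class of $A\#B$ does not depend on where the summing ball is placed, and since $(E,E\cap A)$ is an unknotted ball--arc pair, I may form $A\#B$ inside $E$ itself, replacing the arc $E\cap A$ by the concatenation of an $A$-shaped arc and a $B$-shaped arc; then $A\#B$ agrees with $A$ outside $E$, and $N(A\#B)$ may be chosen to agree with $N(A)$ outside $E$ and, inside $E$, to be $E$ with that concatenated arc as its core. Taking $\tilde f_{A\#B}$ equal to $\tilde f_A$ outside $E$, the satellite $P(A\#B)=\tilde f_{A\#B}(P)$ agrees with $P(A)$ outside $E$, while inside $E$ the map $\tilde f_{A\#B}$ carries the trivial $m$-string link $\{q_1,\dots,q_m\}\times I$ to exactly $m$ parallel copies of that same concatenated tangle. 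Comparing with the previous paragraph, $P(A\#B)$ and $P(A)(B)$ are the same knot in $S^3$; this is the asserted identity in $\mathcal K$, and since every equivalence relation considered here is coarser than isotopy it descends to each $\mathcal K/\!\sim$.

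The only point that needs care --- and the content of the remark ``once the right-hand side is properly interpreted'' --- is the elementary observation that $m$ parallel copies of a concatenation of two $1$-string tangles equal the concatenation of $m$ parallel copies of each. This is immediate from the product structure of the collar $I\times D^2$, and it holds regardless of how the $m$ strands are co-oriented as they traverse $E$ (the geometric intersection number $m$ may exceed $|n|$, so that some strands run one way and some the other, and $\tilde f_A$ and $\tilde f_{A\#B}$ respect this). Thus the lemma is pure bookkeeping once $P(A)(B)$ is read as ``tie $B$ into all $m$ strands of $P(A)$ crossing a chosen meridional disk of $N(A)$''; in particular no four-dimensional input enters, and the entire argument takes place in $S^3$.
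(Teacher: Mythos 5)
Your proof is correct and follows essentially the same route as the paper's, which simply exhibits the isotopy in a figure: interpret $P(A)(B)$ as tying $B$ into the $m$ strands passing through the ball where $A$ was tied in, and observe that $m$ parallel copies of the concatenated $A$-then-$B$ tangle coincide with the strands tied into $A\#B$. Your write-up merely makes the paper's pictorial argument explicit (including the framing and orientation bookkeeping), so there is nothing to add.
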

\begin{proof} The proof is in Figure~\ref{fig:Lemmaconnectsum}.  Given a pattern $P$ and knots $A$ and $B$, the knot $P(A)$ may also be considered to be a pattern, and hence an operator, as in Figure~\ref{fig:Lemmaconnectsum} c). By abuse of notation we use $P(A)$ to also denote this operator. This operator may then act on a knot $B$ yielding $P(A)(B)$, as shown in Figure~\ref{fig:Lemmaconnectsum} b). But this is easily seen to be isotopic to the knot $P(A\#B)$ as shown in Figure~\ref{fig:Lemmaconnectsum} a).
\end{proof}
\begin{figure}[htbp]
\setlength{\unitlength}{1pt}
\begin{picture}(302,180)
\put(-65,15){\includegraphics{mirror1}}
\put(75,15){\includegraphics{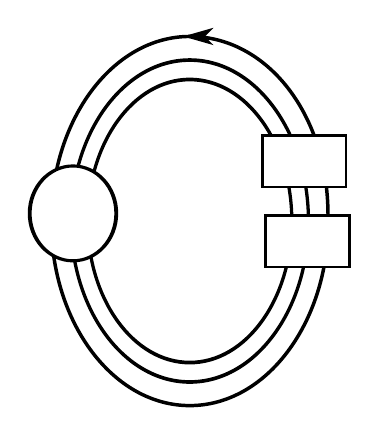}}
\put(225,15){\includegraphics{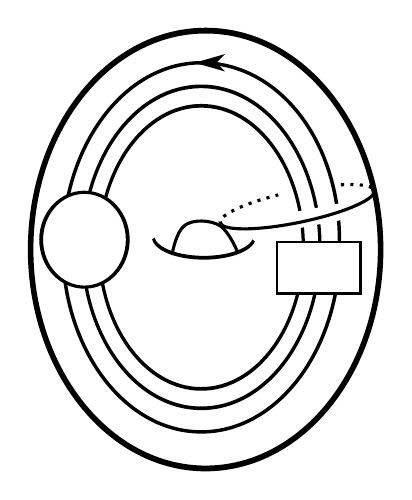}}
\put(-48,76){$P$}
\put(12,72){$A\#B$}
\put(91,76){$P$}
\put(158,90){$B$}
\put(158,67){$A$}
\put(244,86){$P$}
\put(312,78){$A$}
\put(-40,0){a) $P(A\#B)$}
\put(100,0){b) $P(A)(B)$}
\put(240,0){c) the operator $P(A)$}
\end{picture}
\caption{}
\label{fig:Lemmaconnectsum}
\end{figure}

We will also need the following in the proof of our main theorem. In this proposition $P(K)$ is meant as a pattern as in Figure~\ref{fig:Lemmaconnectsum}
c).
\begin{prop}\label{prop:satstrongwinding} If the pattern $P$ has strong winding number $\pm1$ then, for any $K$, so does the pattern $P(K)$.
\end{prop}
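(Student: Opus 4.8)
The plan is to unwind the definitions and reduce the statement about $P(K)$ to the already-known statement about $P$. Recall that, as a pattern, $P(K)$ lives in the solid torus $ST = S^1 \times D^2$ and is obtained by taking the pattern $P$ inside an inner solid torus $ST' \subset ST$ and tying the core strands of $P$ (thought of as running through a ball meeting the image of the longitude) into the knot $K$; see Figure~\ref{fig:Lemmaconnectsum} c). Writing $\widetilde{P(K)}$ for the image of $P(K)$ under $ST \hookrightarrow S^3$, the first step is to observe that $\widetilde{P(K)} = \widetilde{P}(K)$, i.e.\ the knot $\widetilde{P(K)}$ in $S^3$ is exactly the satellite of $K$ with pattern $\widetilde{P}$. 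This is just Lemma~\ref{lem:A} in the special case $A = U$, $B = K$, since $P(U\#K) = P(U)(K) = \widetilde{P}(K)$ and $P(U) = \widetilde{P}$ as a pattern.

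Next I would compare fundamental groups. There is a standard decomposition of the complement $S^3 - \widetilde{P}(K)$ along the boundary torus of the tubular neighborhood of $K$ used in forming the satellite: $S^3 - \widetilde{P}(K)$ is the union of $S^3 - N(K)$ (the ``outside'') with the solid-torus complement $ST - \widetilde{P}$ (the ``inside''), glued along a torus. The longitude of $ST$ is identified with the longitude $\lambda_K$ of $K$, and the meridian $\eta$ of $ST$ is identified with the meridian $\mu_K$ of $K$. By van Kampen, $\pi_1(S^3 - \widetilde{P}(K))$ is generated by the images of $\pi_1(ST - \widetilde{P})$ and $\pi_1(S^3 - N(K))$, amalgamated over the peripheral torus $\pi_1(T^2) = \langle \mu_K, \lambda_K\rangle$. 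The point is that $\pi_1(S^3 - N(K))$ is normally generated by $\mu_K$ (every knot group is normally generated by a meridian), and $\mu_K$ is identified with $\eta \in \pi_1(ST - \widetilde{P})$. Hence the normal closure of $\eta$ in $\pi_1(S^3 - \widetilde{P}(K))$ already contains the whole image of $\pi_1(S^3 - N(K))$.

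The third step completes the argument: since $P$ has strong winding number $\pm 1$, by Definition~\ref{def:strongwinding} the meridian $\eta$ normally generates $\pi_1(S^3 - \widetilde{P})$. Now $\pi_1(S^3 - \widetilde{P})$ is the quotient of $\pi_1(ST - \widetilde{P})$ obtained by capping off the solid-torus complement with a $2$-handle along $\lambda$ (i.e.\ filling in the outer $ST$), so $\pi_1(ST - \widetilde{P})$ is normally generated by $\eta$ together with $\lambda$; but $\lambda$ is a product of meridional elements (it is nullhomologous, hence lies in the commutator subgroup — more directly, $\lambda$ bounds a Seifert surface in $ST$ pushed off of $\widetilde{P}$, so $\lambda$ is in the normal closure of $\eta$ already, using that $\eta$ normally generates). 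Actually the cleanest route: the normal closure of $\eta$ in $\pi_1(ST - \widetilde{P})$ surjects onto the normal closure of $\eta$ in $\pi_1(S^3 - \widetilde{P}) = \pi_1(ST-\widetilde{P})/\langle\langle\lambda\rangle\rangle$, which is everything; and $\lambda \in \langle\langle \eta\rangle\rangle$ because the winding number being $\pm 1$ forces $\lambda$ to map to a generator of $H_1$ times... — here I should be slightly careful, so instead I would argue that the meridian $\mu_{P(K)}$ of the pattern $P(K)$, i.e.\ $\eta$ again, maps onto a normal generating set. Combining the two steps: the normal closure of $\eta$ in $\pi_1(S^3 - \widetilde{P(K)})$ contains the image of $\pi_1(S^3 - N(K))$ (step two) and maps onto the normal closure of $\eta$ in $\pi_1(S^3-\widetilde{P})$, which is all of it (strong winding number $\pm 1$ of $P$); since $\pi_1(S^3 - \widetilde{P(K)})$ is generated by those two pieces, $\eta$ normally generates it, i.e.\ $P(K)$ has strong winding number $\pm 1$.

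The main obstacle — really the only subtle point — is making precise the Mayer--Vietoris/van Kampen bookkeeping on fundamental groups, specifically verifying that under the gluing the meridian $\eta$ of $ST$ is genuinely sent to a meridian of $K$ (so that the normal closure of $\eta$ absorbs $\pi_1(S^3-N(K))$), and that killing $\eta$ in $\pi_1(ST - \widetilde{P})$ recovers killing it in $\pi_1(S^3 - \widetilde{P})$ up to also killing $\lambda$, which is harmless because $\lambda$ is itself in the normal closure of $\eta$. I would handle this by drawing the explicit picture (Figure~\ref{fig:Lemmaconnectsum}~c) suffices) and invoking the standard satellite-complement decomposition rather than belaboring a presentation; the winding-number-$\pm1$ hypothesis on $P$ is what guarantees $\lambda \in \langle\langle\eta\rangle\rangle$ inside $\pi_1(ST-\widetilde{P})$, so no loss occurs.
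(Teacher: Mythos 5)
Your overall skeleton is the same as the paper's: decompose $S^3-P(K)$ as $(ST-P)\cup_{T^2}(S^3-N(K))$, observe that $\pi_1(S^3-N(K))$ is normally generated by $\mu_K$, which is glued to $\eta$, and use the strong winding number hypothesis to deal with the inner piece. The genuine gap is in how you dispose of the longitude. Your stated justification --- that winding number $\pm 1$ guarantees $\lambda\in\langle\langle\eta\rangle\rangle$ inside $\pi_1(ST-P)$, because ``$\lambda$ is nullhomologous'' and ``bounds a Seifert surface in $ST$ pushed off of $\widetilde{P}$'' --- is false for every pattern. Indeed $H_1(ST-P)\cong\Z^2$, generated by $\mu_{\widetilde{P}}$ and the meridian $\mu_A$ of the core $A$ of the complementary solid torus; since $\lambda$ bounds a disk in that complementary solid torus, $\lk(\lambda,\widetilde{P})=0$ and $\lk(\lambda,A)=\pm 1$, so $[\lambda]=\pm[\mu_A]$, whereas $[\eta]=\pm w\,[\mu_{\widetilde{P}}]$. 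The image of $\langle\langle\eta\rangle\rangle$ in $H_1(ST-P)$ is the cyclic subgroup generated by $[\eta]$, which does not contain $[\lambda]$; so $\lambda\notin\langle\langle\eta\rangle\rangle_{\pi_1(ST-P)}$ no matter what the winding number is. ($\lambda$ is nullhomologous in $S^3-\widetilde{P}$, not in $ST-P$.) Your hedged fallback --- that $\langle\langle\eta\rangle\rangle_{\pi_1(S^3-P(K))}$ ``maps onto'' $\langle\langle\eta\rangle\rangle_{\pi_1(S^3-\widetilde{P})}$, which is everything, and also contains the image of $\pi_1(S^3-N(K))$ --- is not a proof as written: you never construct a map $\pi_1(S^3-P(K))\to\pi_1(S^3-\widetilde{P})$ (one exists, by pinching the exterior of $K$ onto the unknot exterior), and even granting it, ``surjects onto the quotient and contains one piece'' yields the whole group only after you check that the kernel of that map lies in $\langle\langle\eta\rangle\rangle$ --- which is precisely the longitude bookkeeping being avoided.

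The repair is the paper's one-line observation, made entirely inside the satellite exterior. The part of your step three that is correct gives that $\pi_1(ST-P)$ is normally generated by $\{\eta,\lambda\}$ (killing $\lambda$ yields $\pi_1(S^3-\widetilde{P})$, which is normally generated by $\eta$ by Definition~\ref{def:strongwinding}). In the satellite gluing, $\lambda$ is identified with the preferred longitude of $K$, hence lies in the image of $\pi_1(S^3-N(K))$, which you have already shown is contained in $\langle\langle\eta\rangle\rangle_{\pi_1(S^3-P(K))}$; therefore the image of $\pi_1(ST-P)$ is contained there as well, and van Kampen finishes the argument. One further point you pass over: the proposition is about the pattern $P(K)$ in its own solid torus $ST'$, whose meridian is $\eta'$, not $\eta$; you silently identify the two (``the meridian of the pattern $P(K)$, i.e.\ $\eta$ again''). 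The paper closes this by noting that $\eta'$ is isotopic to $\eta$ in $S^3-P(K)$ (Figure~\ref{fig:strongwinding}); your write-up needs that sentence too.
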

\begin{proof}  Let $\eta, \lambda$ denote a meridian and longitude, respectively,  of a solid torus $ST$ associated to the pattern $P$. The kernel of the epimorphism
$$
\pi_1(ST-P)\to \pi_1(S^3-\widetilde{P})
$$
is normally generated by $\lambda$. By hypothesis, $\pi_1(S^3-\widetilde{P})$ is normally generated by the image of $\eta$. It follows that $\pi_1(ST-P)$ is normally generated by $\{\eta,\lambda\}$.

View the exterior of the satellite knot $P(K)$ as the union of  $S^3-K$ and $ST-P$, identified along $\partial(ST)$. Thus $\pi_1(S^3-P(K))$ is normally generated by $\{\eta,\lambda,\mu_{K}\}$. But $\lambda$, lying on the boundary of $S^3-K$, is in the normal closure of $\mu_K$ so it is redundant. Finally, $\eta$ is identified with $\mu_K$ so the latter is redundant. Hence 
$\pi_1(S^3-P(K))$ is normally  generated by $\eta$.

Now let $\eta'$ be a meridian of  the solid torus $ST'$  in which is contained the pattern $P(K)$. This solid torus is bounded by the darker torus in Figure~\ref{fig:strongwinding}. The solid torus $ST$ is bounded by the smaller torus shown dashed in Figure~\ref{fig:strongwinding}. The meridian $\eta'$ is isotopic to the meridian $\eta$ in $S^3-P(K)$. Hence $\pi_1(S^3-P(K))$ is normally  generated by $\eta'$. Thus the operator $P(K)$ has strong winding number $\pm1$.
\begin{figure}[htbp]
\setlength{\unitlength}{1pt}
\begin{picture}(182,130)
\put(30,-10){\includegraphics{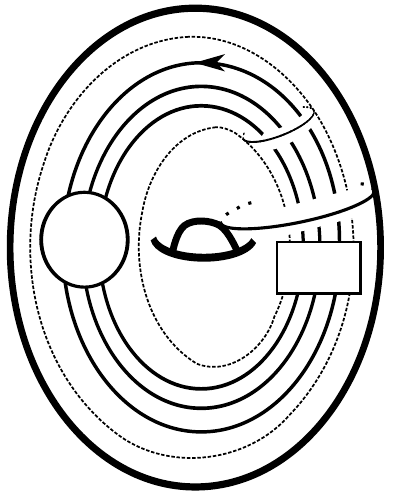}}
\put(143,75){$\eta'$}
\put(118,54){$K$}
\put(95,87){$\eta$}
\put(49,62){$P$}
\end{picture}
\caption{}
\label{fig:strongwinding}
\end{figure}
\end{proof}

\section{Knot concordance and homology cobordism}\label{sec:homologycobordism}

An important ingredient in our proof is a well-known relationship between concordance of knots and homology cobordism of certain $3$-manifolds associated to the knots via surgery. Specifically, given a knot $K$ in $S^3$ we may associate to it the closed oriented $3$-manifold, $M(K)$, called the \textbf{zero-framed surgery} on $S^3$ along  $K$. This is obtained by removing from $S^3$ a tubular neighborhood of $K$ and then replacing it differently, in such a way that the longitude of $K$ bounds the meridional disk of the solid torus.  Let $\mathcal{M}$ denote the set of oriented diffeomorphism classes of $3$-manifolds. Then zero-framed surgery may be viewed as a function $\widetilde{M}$ as shown in Diagram~\ref{diagram:zerosurgery}. If $K$ is concordant to $J$ in $S^3\times [0,1]$ then one can do zero-framed surgery along the connecting annulus and see that $M(K)$ is $\Z$-homology cobordant to $M(J)$ via a $4$-manifold $V$ whose $\pi_1$ is normally generated by $\pi_1$ of either of its boundary components. Therefore $\widetilde{M}$ descends to a well-defined zero-surgery function,  $M$, as shown in Diagram~\ref{diagram:zerosurgery}, where $\mathcal{HC}^*=\mathcal{M}/\sim$ is defined as follows.
\begin{equation}\label{diagram:zerosurgery}
\begin{diagram}\dgARROWLENGTH=2.0em
\node{\mathcal{K}} \arrow{e,t}{\widetilde{M}}\arrow{s,t}{\pi}\node{\mathcal{M}}\arrow{s,t}{\pi}\\
\node{\mathcal{C}^*} \arrow{e,t}{M}\node{\mathcal{HC}^*}
\end{diagram}
\end{equation}
\begin{defn}\label{def:hcrelations}  Suppose $X,Y\in \mathcal{M}$. We say $X\sim Y$ in $\mathcal{HC}^{ex}$ (respectively, $\mathcal{HC}^{top}$)  if $X$ and $Y$ are smoothly (respectively, topologically) homology cobordant via a $4$-manifold $V$ for which $\pi_1(V)$ is normally generated by $\pi_1$ of either boundary component. We say $X\sim Y$ in $\mathcal{HC}^{\frac{1}{n}}$ if $X$ and $Y$ are smoothly $\Z[\frac{1}{n}]$-homology cobordant.
\end{defn}

It is interesting to ask to what extent such surgery functions are injective or weakly injective (note however that a knot and its reverse necessarily have the same image). For example, the weak injectivity of $\widetilde{M}$ is the famous Property $R$ for knots, proved by Gabai ~\cite{Ga87}. In this paper we require the analogue of Property $R$ for concordance. This is a much easier and well-known result. It says that $K$ being zero in $\C^*$ admits a characterization in terms of the zero-framed surgery $M(K)$ (see ~\cite{KM3} for a similar but weaker result).

\begin{prop}\label{prop:homcobordism}  The function $M$ is weakly injective in all $3$ categories $*$, that is, $M(K)\sim M(0)$ if and only if $K=0$. If the smooth $4$-dimensional Poincar\'{e} Conjecture is true then $M:\C\to\C$ is weakly injective.
\end{prop}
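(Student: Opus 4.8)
The plan is to route everything through a $4$-manifold $W$ with $\partial W=M(K)$ and the $\Z$-homology (or, when $*=\frac{1}{n}$, the $\Z[\frac{1}{n}]$-homology) of $S^1$, in which the meridian $\mu_K$ generates $H_1$ and, in the $ex$ and $top$ cases, normally generates $\pi_1$; recall $M(0)=M(U)=S^1\times S^2$. The ``only if'' half of the Proposition is the routine direction: given a ($*$-)concordance annulus $A$ from $K$ to $U$ in $W_0$ (a standard or exotic $S^3\times[0,1]$, or a smooth $\Z[\frac{1}{n}]$-homology $S^3\times[0,1]$), perform zero-framed surgery on $A$ to obtain $V$ with $\partial V=M(K)\sqcup M(U)$; a Mayer--Vietoris computation shows $V$ is a homology cobordism of the allowed type, and since any loop in $W_0\setminus N(A)$ bounds a disk in $W_0$ meeting $A$ transversally, $\pi_1(W_0\setminus N(A))$, hence $\pi_1(V)$, is normally generated by a meridian of $A$, that is, by $\pi_1$ of either boundary component. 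So $M(K)\sim M(U)$ in $\mathcal{HC}^*$.

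For the substantive direction, begin with $V$ realizing $M(K)\sim M(U)$ in $\mathcal{HC}^*$. Since $M(U)=S^1\times S^2=\partial(S^1\times B^3)$, cap off that end and set $W=V\cup_{S^1\times S^2}(S^1\times B^3)$, so $\partial W=M(K)$. First I would verify by Mayer--Vietoris that $W$ has the $\Z$-homology (resp. $\Z[\frac{1}{n}]$-homology) of $S^1$ with $H_1(W)$ generated by $\mu_K$: the $S^1$-factor of $S^1\times S^2$ maps isomorphically to $H_1$ of both $S^1\times B^3$ and $V$ (the latter because $V$ is a homology cobordism), and tracing these isomorphisms against $H_1(M(K))=\Z\langle\mu_K\rangle$ identifies the generator. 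In the $ex$ and $top$ cases I would also check, by van Kampen and Definition~\ref{def:hcrelations}, that $\pi_1(W)=\pi_1(V)$ is normally generated by the image of $\pi_1(M(K))$, which is itself normally generated by $\mu_K$ (Wirtinger generators, after killing the longitude); no such control is needed when $*=\frac{1}{n}$.

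Next I would attach to $W$ a $2$-handle along $\mu_K\subset M(K)=\partial W$, with the framing obtained by viewing the trace of zero-surgery on $K$ upside down (a single $2$-handle attached to $M(K)$ along the belt sphere $\mu_K$ whose far boundary is $S^3$); call the result $\hat W$. Then $\partial\hat W=S^3$, and the cocore of this $2$-handle is a properly embedded disk $D\subset\hat W$ with $\partial D=K$. From the exact sequence of $(\hat W,W)$ and the fact that $\mu_K$ generates $H_1(W)$ (resp. $H_1(W;\Z[\frac{1}{n}])$), $\hat W$ is a $\Z$-homology (resp. $\Z[\frac{1}{n}]$-homology) $4$-ball, and in the $ex$ and $top$ cases $\pi_1(\hat W)=\pi_1(W)/\langle\langle\mu_K\rangle\rangle=1$. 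Then I conclude case by case: when $*=\frac{1}{n}$, $D$ exhibits $K$ as $\Z[\frac{1}{n}]$-slice, so $K=0$ in $\C^{\frac{1}{n}}$; when $*=top$, capping $\hat W$ with $B^4$ gives a simply connected topological homology $4$-sphere, so by Freedman's topological $4$-dimensional Poincar\'e Conjecture $\hat W$ is homeomorphic to $B^4$ and $D$ is a locally flat slice disk, giving $K=0$ in $\C^{top}$; when $*=ex$, the same argument makes the smooth manifold $\hat W$ homeomorphic to $B^4$ while $D$ stays smooth, giving $K=0$ in $\C^{ex}$; and if the smooth $4$-dimensional Poincar\'e Conjecture holds then $\C=\C^{ex}$ (or, directly, $\hat W\cup B^4$ is a smooth homotopy $4$-sphere, hence $S^4$, so $\hat W$ is a smooth $B^4$), and $M:\C\to\C$ is weakly injective.

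I expect the main obstacle to be bookkeeping rather than any single hard estimate: one must confirm that the Mayer--Vietoris isomorphisms genuinely carry a generator of $H_1(W)$ onto $\mu_K$ --- so that attaching the $2$-handle kills exactly the class that makes $\hat W$ a homology ball --- and that the dual $2$-handle's cocore recovers $K$ and not merely some knot with the same meridian; this is the classical ``trace of a surgery, turned upside down'' picture, and it requires care with framings. One must likewise keep straight which clause of Definition~\ref{def:hcrelations} is used where: the normal-generation condition on $\pi_1(V)$ is precisely what forces $\pi_1(\hat W)=1$ in the $ex$ and $top$ cases, while Freedman's theorem should be invoked only after simple connectivity and the homology type of $B^4$ have been established.
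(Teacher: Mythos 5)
Your argument is correct and takes essentially the same route as the paper: cap the homology cobordism with $S^1\times B^3$, attach a $2$-handle along $\mu_K$ to obtain a contractible (resp.\ homology) $4$-ball with boundary $S^3$ whose cocore is the slice disk for $K$, and invoke Freedman in the $ex$ and $top$ cases. The only step you compress is the smooth case: passing from $\hat W\cup B^4\cong S^4$ to ``$\hat W$ is a smooth $B^4$'' is exactly the special case of the smooth Schoenflies problem that the paper spells out (uniqueness of smooth ball embeddings plus the isotopy extension theorem), so that parenthetical, or your alternative appeal to $\C=\C^{ex}$ under the smooth Poincar\'e Conjecture, should be justified by that argument.
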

 
\begin{proof}  This result is discussed in more detail in ~\cite[Prop. 1.2, Prop. 1.5]{CFHH}. Since this result and its proof are by now well-known, we merely sketch, for the convenience of the reader, the proof in the category $*=ex$. Suppose $M(K)\sim M(0)$, that is $M(K)$ is smoothly homology cobordant to $S^1\times S^2$, with the extra $\pi_1$ condition. Then, after capping off with $S^1\times B^3$, we see that $M(K)=\partial W$ where the pair $(W, M(K))$ is a smooth $\Z$-homology $(S^1\times B^3,S^1\times S^2)$ with the additional property that $\pi_1(W)$ is normally generated by the meridian of $K$. Let $\mathcal{B}$ be the $4$-manifold obtained from $W$ by adding a $2$-handle along the meridian of $K$ in $M(K)$. Then $\mathcal{B}$ is a smooth, contractible $4$-manifold whose boundary is $S^3$. By work of M. Freedman, $\mathcal{B}$ is homeomorphic to $B^4$ ~\cite{Freed1}. Moreover the co-core of the $2$-handle is a smooth slice disk for $K$. Thus $K$ is smoothly concordant to the trivial knot in a smooth manifold that is homeomorphic to $S^3\times [0,1]$. This shows the difficult direction of the first sentence of Proposition~\ref{prop:homcobordism} in the case $*=ex$.

For the last sentence of Proposition~\ref{prop:homcobordism}, we cap off $\mathcal{B}$ with $B^4$ and arrive at $\Sigma$, a smooth homotopy $4$-sphere, which under our hypothesis is diffeomorphic to $S^4$. Hence there is a smoothly embedded $3$-sphere in $S^4$ whose complementary components are diffeomorphic to $\mathcal{B}$ and $B^4$ respectively. But this special case of the smooth Schoenflies problem is known: if one complementary component is diffeomorphic to $B^4$ then the other is also. The sketch of the proof is that the smooth embedding $B^4\hookrightarrow S^4$ is isotopic to the standard embedding as the upper-hemisphere ~\cite[Theorem 3.34]{RourkeSanderson}. Thus, by the isotopy extension theorem, the diffeomorphism $\Sigma\cong S^4$ is isotopic to one sending $\mathcal{B}$ diffeomorphically to the lower hemisphere. Hence $\mathcal{B}$ is diffeomorphic to $B^4$ so $K$ is smoothly slice as desired.
\end{proof}

\section{Satellite knots and homology cobordism}\label{sec:satellitesandhomologycobordism}

A crucial ingredient in the proof of Theorem~\ref{thm:mainresult} is a strengthening of a recent result of Cochran-Franklin-Hedden-Horn (concerning the \textit{failure} of injectivity of $M:\C^*\to \mathcal{HC}^*$ on certain satellites of non-zero winding number!)   Their result is given below.  We note that it is the failure of the results in this section for patterns of winding number zero that prevents us from proving  injectivity for winding number zero operators. 

\begin{thm}~\cite[Thm. 2.1]{CFHH}\label{thm:CFHHmainsatelliteresult}  Suppose $P$ is a pattern with non-zero winding number $n$ such that $\widetilde{P}\equiv P(U)$ is smoothly slice in a $\Z[\frac{1}{n}]$-homology ball. Then, for any knot $K$,  $M(P(K))$ is smoothly $\Z\left[\frac{1}{n}\right]$-homology cobordant to $M(K)$. Hence $M(P(K))\sim M(K)$ in $\mathcal{HC}^{\frac{1}{n}}$.
\end{thm}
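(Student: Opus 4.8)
The plan is to construct an explicit smooth $\Z[\frac{1}{n}]$-homology cobordism from $M(K)$ to $M(P(K))$, assembled by gluing, along tori, three standard pieces: the exterior of the given slice disk for $\widetilde{P}$, a copy of $S^1\times D^3$, and a product $(S^3-N(K))\times[0,1]$.

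First I would record the standard description of the satellite operation at the level of zero-surgeries. Writing $E=ST-N(P)$ for the pattern exterior and $\eta$ for the meridian of $ST$, regarded as a curve in $S^3-\widetilde{P}\subset M(\widetilde{P})$, one has $S^3-N(P(K))=E\cup_{\partial(ST)}(S^3-N(K))$, glued by the satellite identification. Since the untwisted satellite carries the $0$-framed longitude of $\widetilde{P}$ to the $0$-framed longitude of $P(K)$, regrouping the gluings yields
\[
M(P(K))\;\cong\;\bigl(M(\widetilde{P})-N(\eta)\bigr)\ \cup_{T^2}\ \bigl(S^3-N(K)\bigr),
\]
where the gluing torus is $T^2=\partial N(\eta)=\partial N(K)$ and it identifies the meridian of $\eta$ with $\lambda_K$ and the longitude of $\eta$ with $\mu_K$. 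Applying the identical recipe to the core pattern (for which $\widetilde{P}=U$ and $n=1$) presents $M(K)\cong\bigl(S^1\times S^2-N(\eta_U)\bigr)\cup_{T^2}(S^3-N(K))$, with $S^1\times S^2-N(\eta_U)\cong S^1\times D^2$ a solid torus. The crucial point is that $[\eta]=n$ in $H_1(M(\widetilde{P}))\cong\Z$, since $\eta$ is a meridian of $ST$ and $P$ has winding number $n$; this is the only place the winding number enters.

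Because $M(K)$ and $M(P(K))$ share the fixed piece $S^3-N(K)$, glued along the fixed torus $T^2$, it suffices to produce a smooth $\Z[\frac{1}{n}]$-homology cobordism $C$, rel $T^2$, from the solid torus $A'=S^1\times S^2-N(\eta_U)$ to $A=M(\widetilde{P})-N(\eta)$, and then to set $W=\bigl((S^3-N(K))\times[0,1]\bigr)\cup_{T^2\times[0,1]}C$, which has $\partial W=M(K)\sqcup-M(P(K))$. Let $B$ be a $\Z[\frac{1}{n}]$-homology $4$-ball containing a slice disk $\Delta$ for $\widetilde{P}$, and put $Z=B-N(\Delta)$. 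A standard long-exact-sequence argument shows $Z$ is a $\Z[\frac{1}{n}]$-homology $S^1\times D^3$ with $\partial Z=M(\widetilde{P})$, and the meridian $\mu_{\widetilde{P}}$ generates $H_1(Z;\Z[\frac{1}{n}])\cong\Z[\frac{1}{n}]$. The torus $T^2$ splits $\partial Z$ as $A\cup_{T^2}N(\eta)$, so (after the standard collar insertion) $Z$ is a cobordism rel $\partial$ from $A$ to the solid torus $N(\eta)$; likewise $S^1\times D^3$ is a cobordism rel $\partial$ from $A'$ to the solid torus $N(\eta_U)$. Stacking $Z$ on top of the reversed $S^1\times D^3$, glued along an orientation-reversing identification $N(\eta_U)\cong N(\eta)$ matching meridians and longitudes, produces the desired $C$, a cobordism rel $\partial$ from $A'$ to $A$.

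It remains to verify the homology, which I expect to be where the real work lies. One checks by Mayer--Vietoris that $N(\eta)\hookrightarrow Z$ is a $\Z[\frac{1}{n}]$-homology equivalence: this holds precisely because the core of $N(\eta)$ represents $n$ times a generator of $H_1(Z;\Z[\frac{1}{n}])$ and $n$ is a unit in $\Z[\frac{1}{n}]$ --- over $\Z$ it would fail unless $n=\pm1$. Together with the elementary facts that $N(\eta_U)\hookrightarrow S^1\times D^3$, $A'\hookrightarrow S^1\times D^3$, and $A\hookrightarrow Z$ are homology equivalences, this shows $C$ is a $\Z[\frac{1}{n}]$-homology cobordism rel $T^2$ from $A'$ to $A$. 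A second Mayer--Vietoris over $T^2\times[0,1]$, with the pieces $(S^3-N(K))\times[0,1]$ (a homology $S^1\times D^2\times[0,1]$, since a knot exterior has the homology of a solid torus) and $C$, then shows $W$ is a smooth $\Z[\frac{1}{n}]$-homology cobordism from $M(K)$ to $M(P(K))$; that is, $M(P(K))\sim M(K)$ in $\mathcal{HC}^{\frac{1}{n}}$. (When $n=1$ every map above is already an isomorphism over $\Z$, yielding a genuine $\Z$-homology cobordism.) The main obstacle beyond this bookkeeping is pinning down the framings and the identifications of $\mu_\eta$ and $\lambda_\eta$ precisely enough that the assembled $C$ is demonstrably a cobordism rel $T^2$ with the stated ends, and recognizing that slicing $\widetilde{P}$ in a $\Z[\frac{1}{n}]$-homology ball --- rather than a genuine $4$-ball --- is exactly what makes $Z$ have the homology of $S^1\times D^3$ over $\Z[\frac{1}{n}]$, with the only denominators appearing in the computation being powers of $n$ forced by the winding number.
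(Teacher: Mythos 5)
Your proposal is correct, but it follows a genuinely different route from the one taken in (and cited by) this paper. The statement is quoted from CFHH, and the argument reproduced here (in the proof of Theorem~\ref{thm:CFHHmainstronger}, which follows CFHH's proof while tracking $\pi_1$) is handle-theoretic: one starts with $M(K)\times[0,1]$, attaches a $1$-handle, stacks on the homology cobordism from $M(U)\#M(K)$ to $M(\widetilde{P})\#M(K)$ coming from the slice disk, attaches a $0$-framed $2$-handle along a curve isotopic to $\eta^{-1}\mu_K$, and then identifies the upper boundary with $M(P(K))$ by handle slides and a slam-dunk, checking $H_*(W,M(K))=0$ at the end. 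You instead exploit the splice description of satellite zero-surgeries, $M(P(K))\cong (M(\widetilde{P})-N(\eta))\cup_{T^2}(S^3-N(K))$ and $M(K)\cong (S^1\times D^2)\cup_{T^2}(S^3-N(K))$, and assemble the cobordism by gluing the slice-disk exterior $Z=B-N(\Delta)$, a copy of $S^1\times D^3$, and $(S^3-N(K))\times[0,1]$ along tori, verifying everything by Mayer--Vietoris; the key points you isolate --- $\partial Z\cong M(\widetilde{P})$, $H_*(Z;\Z[\frac{1}{n}])\cong H_*(S^1)$ generated by $\mu_{\widetilde{P}}$, and $[\eta]=n[\mu_{\widetilde{P}}]$ so that $N(\eta)\hookrightarrow Z$ is a $\Z[\frac{1}{n}]$-homology equivalence exactly because $n$ is inverted --- are all correct, and the framing issues you flag (untwisted satellites carry the $0$-framed longitude of $\widetilde{P}$ to that of $P(K)$, and the $0$-framing of $\eta$ agrees with its $\partial(ST)$-framing) are standard and resolvable as you expect. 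The trade-off: your cut-and-paste argument makes the homological role of the winding number and of the $\Z[\frac{1}{n}]$ coefficients completely transparent and avoids Kirby calculus altogether, whereas the paper's handle-by-handle construction is better suited to tracking normal generation of $\pi_1$ of the cobordism, which is precisely what is needed for the strengthened versions (Theorem~\ref{thm:CFHHmainstronger} and Corollary~\ref{cor:CFHHmainstronger}) in $\mathcal{HC}^{ex}$ and $\mathcal{HC}^{top}$; if you wanted to recover those statements from your construction, you would still need to analyze $\pi_1$ of your glued-up $W$ via van Kampen, which is doable but is extra work your present write-up does not address (and is not needed for the statement at hand).
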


A special case of Theorem~\ref{thm:mainresult}  a) (weak injectivity for slice operators) follows:

\begin{cor}\label{cor:CFHH} Suppose $P$ is a pattern with non-zero winding number $n$ such that $\widetilde{P}$ is smoothly slice in a $\Z[\frac{1}{n}]$-homology ball. Then
$$
P:\C^{\frac{1}{n}}\to\C^{\frac{1}{n}}
$$
is weakly injective.
\end{cor}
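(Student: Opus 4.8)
The plan is to deduce Corollary~\ref{cor:CFHH} directly from Theorem~\ref{thm:CFHHmainsatelliteresult} together with the weak injectivity of the surgery function $M$ from Proposition~\ref{prop:homcobordism}. The diagram~(\ref{diagram:zerosurgery}) already tells us that $M\colon \C^{\frac 1n}\to\mathcal{HC}^{\frac 1n}$ is a well-defined function with $M\circ P = $ (the map induced by $K\mapsto M(P(K))$), so the strategy is simply to factor the argument through $M$.

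First I would suppose $P(K)=P(0)=\widetilde P$ in $\C^{\frac 1n}$; our goal is to show $K=0$ in $\C^{\frac 1n}$. Applying the zero-surgery function, $M(P(K))\sim M(P(0))$ in $\mathcal{HC}^{\frac 1n}$. Next, invoke Theorem~\ref{thm:CFHHmainsatelliteresult}: since $\widetilde P$ is smoothly slice in a $\Z[\frac 1n]$-homology ball, for \emph{any} knot we have $M(P(J))\sim M(J)$ in $\mathcal{HC}^{\frac 1n}$. Applying this to both $J=K$ and $J=0$, and composing the homology cobordisms (which is legitimate since $\mathcal{HC}^{\frac 1n}$-equivalence is an equivalence relation — stack the cobordisms), we obtain $M(K)\sim M(P(K))\sim M(P(0))\sim M(0)$, i.e. $M(K)\sim M(0)$ in $\mathcal{HC}^{\frac 1n}$. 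Finally, by the weak injectivity half of Proposition~\ref{prop:homcobordism} in the category $*=\frac 1n$, this forces $K=0$ in $\C^{\frac 1n}$, which is exactly weak injectivity of $P$.

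There is essentially no hard part here: the corollary is a formal consequence of the two cited results, and the only things to be careful about are bookkeeping — checking that the three $\mathcal{HC}^{\frac 1n}$-equivalences genuinely compose (transitivity of smooth $\Z[\frac 1n]$-homology cobordism, which is immediate by gluing cobordisms along a common boundary component), and that $M(0)=M(U)=S^1\times S^2$ so that Proposition~\ref{prop:homcobordism} applies verbatim. If one wanted to state it without reference to $M$, one could instead phrase it as: $P(K)$ concordant to $\widetilde P$ in a $\Z[\frac 1n]$-homology $S^3\times[0,1]$ implies $M(P(K))$ is $\Z[\frac 1n]$-homology cobordant to $S^1\times S^2$, then run the same chain. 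The only genuine mathematical input — the slice-disk-in-an-exotic-ball construction via Freedman and the Schoenflies argument — is already packaged inside Proposition~\ref{prop:homcobordism}, so nothing new needs to be proved.

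I should note that this is strictly the \emph{weak} injectivity statement and a strict special case ($\widetilde P$ slice in a homology ball) of part~a of the main theorem; the full Theorem~\ref{thm:mainresult}a will require replacing ``$\widetilde P$ slice'' by an arbitrary pattern and replacing weak injectivity by injectivity, presumably by feeding $K\#-J$ through the slice-operator version after first absorbing the knotting of $\widetilde P$ — but for the corollary as stated, the three-line deduction above suffices.
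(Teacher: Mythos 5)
Your proposal is correct and follows essentially the same route as the paper: factor through the zero-surgery function $M$, invoke Theorem~\ref{thm:CFHHmainsatelliteresult} to get $M(P(K))\sim M(K)$, and conclude via the weak injectivity of $M$ from Proposition~\ref{prop:homcobordism}. The only cosmetic difference is that the paper uses the hypothesis to note $P(0)=\widetilde{P}=0$ in $\C^{\frac{1}{n}}$ directly (so $M(P(K))\sim M(0)$), whereas you apply Theorem~\ref{thm:CFHHmainsatelliteresult} a second time to the unknot; both steps are valid and equivalent in content.
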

\begin{proof}[Proof of Corollary~\ref{cor:CFHH} ] Suppose $P(K)= P(0)$ in $\C^{\frac{1}{n}}$. Since $P(0)=\widetilde{P}=0$, $P(K)=0$. Thus $M(P(K))\sim M(0)$ in $\mathcal{HC}^{\frac{1}{n}}$.  By Theorem~\ref{thm:CFHHmainsatelliteresult} , we also have $M(P(K))\sim M(K)$. Hence $M(K)\sim M(0)$. Since the function $M$ is weakly injective by  Proposition~\ref{prop:homcobordism}, it follows that $K=0$ in $\C^{\frac{1}{n}}$.
\end{proof}

In this paper, in the case that $n=\pm 1$,  we will strengthen the hypotheses of Theorem~\ref{thm:CFHHmainsatelliteresult}  in order to get the stronger conclusion that $M(P(K))\sim M(K)$ in $\mathcal{HC}^{ex}$ or $\mathcal{HC}^{top}$. That is, we want to be able to conclude that $M(P(K))$ and $M(K)$ are smoothly homology cobordant via a $4$-manifold $V$ whose $\pi_1$ is normally generated by that of either of its boundary components. 

\begin{thm}\label{thm:CFHHmainstronger}  Suppose $P$ is a pattern with winding number $\pm 1$ such that $\widetilde{P}$ is a pseudo-slice knot (respectively, topologically slice knot) and such that the meridian of $ST$ normally generates $\pi_1(B^4-\Delta)$ where $\Delta$ is a slice disk for $\widetilde{P}$. Then, for any knot $K$,  $M(P(K))\sim M(K)$ in $\mathcal{HC}^{ex}$ (respectively in $\mathcal{HC}^{top}$). 
\end{thm}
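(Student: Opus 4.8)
The plan is to build an explicit homology cobordism $V$ from $M(P(K))$ to $M(K)$ out of the slice-disk exterior $E := B^4 - N(\Delta)$ (in the pseudo-slice case $B^4$ is replaced by the ambient exotic ball, and in the topological case $E$ is a topological manifold; in all cases $E$ has the $\Z$-homology of $S^1$ and $\partial E = M(\widetilde{P})$), and then to read off the required $\pi_1$-normal-generation property directly from the hypothesis on the meridian $\eta$. First I would record the two standard ``satellite $=$ surgery'' identities, which hold over $\Z$ precisely because the winding number is $\pm 1$ (so that no framing correction intervenes). With $E_K := S^3 - N(K)$, with $ST$ the standard solid torus containing $P = \widetilde{P}$, with $N(\eta) := S^3 - ST$ the complementary solid torus (whose core is $\eta$), and with $X_P := M(\widetilde{P}) - N(\eta)$, one has
\[
M(P(K)) \;=\; X_P \cup_{\partial ST} E_K \qquad\text{and}\qquad M(K) \;=\; E_K \cup_{\partial ST} N(\eta),
\]
both gluings being induced by $\widetilde{f}_K|_{\partial ST}$; moreover $X_P$ is a $\Z$-homology $S^1 \times D^2$ whose $H_1$ is generated by $\eta$, since $\eta$ generates $H_1(S^3 - \widetilde{P})$. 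This is exactly the homological bookkeeping in the proof of \cite[Thm.~2.1]{CFHH}, specialized to $n = \pm 1$.

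Next I would set
\[
V \;:=\; \bigl(M(P(K)) \times [0,1]\bigr) \;\cup_{X_P}\; E,
\]
gluing the copy of $X_P$ sitting in $M(P(K)) \times \{1\}$ to the copy of $X_P$ sitting in $\partial E = M(\widetilde{P})$ by the identity. From the two identities above, the new boundary component is $\bigl(M(P(K)) - X_P\bigr) \cup_{\partial ST} \bigl(M(\widetilde{P}) - X_P\bigr) = E_K \cup_{\partial ST} N(\eta) = M(K)$, so $\partial V = -M(P(K)) \sqcup M(K)$. To see that $V$ is a $\Z$-homology cobordism, excision and a deformation retraction give $H_*(V, M(P(K)); \Z) \cong H_*(E, X_P; \Z)$, and the latter vanishes because both $E$ and $X_P$ have the $\Z$-homology of $S^1$ while the inclusion $X_P \hookrightarrow E$ carries the generator $\eta$ of $H_1(X_P)$ to $\pm$(meridian of $\Delta$), which generates $H_1(E)$ (winding number $\pm 1$ again); hence $M(P(K)) \hookrightarrow V$, and therefore also $M(K) \hookrightarrow V$, is a $\Z$-homology equivalence. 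The topological case runs identically with $E$ topological.

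It remains to check the $\pi_1$ condition of Definition~\ref{def:hcrelations}. Van Kampen gives $\pi_1(V) = \pi_1(M(P(K))) \ast_{\pi_1(X_P)} \pi_1(E)$. The key point is that $\eta$ lies on $\partial X_P \subset X_P$, so it determines a single element of $\pi_1(V)$ lying simultaneously in the image of $\pi_1(M(P(K)))$, in the image of $\pi_1(E)$, and---being the core of $N(\eta) \subset M(K)$---in the image of $\pi_1(M(K))$. By hypothesis $\eta$ normally generates $\pi_1(E) = \pi_1(B^4 - \Delta)$. Since $\pi_1(V)$ is generated by the images of $\pi_1(M(P(K)))$ and $\pi_1(E)$, and the latter lies in the normal closure of $\eta$, the subgroup $\pi_1(M(P(K)))$ normally generates $\pi_1(V)$. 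For the other end, the normal closure of $\pi_1(M(K))$ contains $\eta$, hence contains all of $\pi_1(E)$, hence the image of $\pi_1(X_P)$; as it also contains $\pi_1(E_K) \subset \pi_1(M(K))$, and $\pi_1(M(P(K)))$ is generated by the images of $\pi_1(X_P)$ and $\pi_1(E_K)$, it contains $\pi_1(M(P(K)))$ and therefore all of $\pi_1(V)$. Thus $V$ realizes $M(P(K)) \sim M(K)$ in $\mathcal{HC}^{ex}$ (respectively $\mathcal{HC}^{top}$). The only genuinely delicate step I expect is the first one---pinning down the surgery identities together with the $\Z$-homology behavior of $X_P$---but this is precisely the $n = \pm 1$ case of \cite[Thm.~2.1]{CFHH}, after which the construction of $V$ and the $\pi_1$ argument are essentially formal.
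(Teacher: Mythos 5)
Your proof is correct, but it takes a genuinely different route from the paper's. The paper follows the handle-theoretic argument of ~\cite[Thm.~2.1]{CFHH}: starting from $M(K)\times[0,1]$ it attaches a $1$-handle, glues on the homology cobordism from $S^1\times S^2\,\#\,M(K)$ to $M(\widetilde{P})\,\#\,M(K)$ furnished by the slice disk $\Delta$, then attaches a $0$-framed $2$-handle along a curve isotopic to $\eta^{-1}\mu_K$, and finally identifies $\partial_+W$ with $M(P(K))$ by handle slides and a slam-dunk, tracking at each stage that $\pi_1(W)$ is normally generated by $\mu_K$ and by $\mu_{P(K)}$. You instead produce the cobordism in a single cut-and-paste step: glue the slice-disk exterior $E$ onto $M(P(K))\times[0,1]$ along the homology solid torus $X_P=M(\widetilde{P})-N(\eta)$, using the splicing identities $M(P(K))=X_P\cup_{\partial ST}E_K$ and $M(K)=E_K\cup_{\partial ST}N(\eta)$, then read off homology from $H_*(E,X_P)=0$ and the $\pi_1$ conditions from van Kampen together with the hypothesis that $\eta$ normally generates $\pi_1(E)$; your verification of both normal-generation statements is sound. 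The two constructions yield essentially the same $4$-manifold (your regluing is what the paper's $2$-handle attachment plus slam-dunk accomplish), but your version avoids Kirby calculus at the cost of having to justify the splicing and framing identities and the homology of $X_P$ directly, whereas the paper's framed-link pictures make the identification of $\partial_+W$ with $M(P(K))$ and the fate of the meridians completely explicit. Two small points to tighten: the absence of a framing correction in $M(P(K))=X_P\cup_{\partial ST}E_K$ comes from the satellite being untwisted (preferred longitude sent to preferred longitude), not from the winding number being $\pm 1$; the winding number is what you need for $X_P$ to be a $\Z$-homology $S^1\times D^2$ generated by $\eta$ and for $[\eta]=\pm[\mu_{\widetilde{P}}]$ in $H_1(E)$. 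Also, $\eta$ is isotopic to, rather than equal to, the core of the complementary solid torus; what your argument actually uses is that $\eta$ lies on the splitting torus $\partial ST$, hence in $X_P\subset M(P(K))$ and in $M(K)$ simultaneously. Neither point affects the validity of the argument.
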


Since for a strong winding number $\pm 1$ pattern, $\mu_{\tilde{P}}$ lies in the normal closure of $\eta$ in $\pi_1(S^3-\widetilde{P})$, and since $\mu_{\tilde{P}}$ normally generates $\pi_1(B^4-\Delta)$, Theorem~\ref{thm:CFHHmainstronger}  implies:

\begin{cor}\label{cor:CFHHmainstronger}  Suppose $P$ is a pattern with strong winding number $\pm 1$ such that $\widetilde{P}$ is a pseudo-slice knot (respectively, topologically slice knot). Then, for any knot $K$,  $M(P(K))\sim M(K)$ in $\mathcal{HC}^{ex}$ (repectively in $\mathcal{HC}^{top}$).
\end{cor}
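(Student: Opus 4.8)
The goal is to prove Theorem~\ref{thm:CFHHmainstronger}: if $P$ has winding number $\pm 1$, $\widetilde P$ is pseudo-slice (resp.\ topologically slice) via a disk $\Delta$, and the meridian $\eta$ of $ST$ normally generates $\pi_1(B^4 - \Delta)$ (here $B^4$ is the ambient manifold, possibly exotic resp.\ only a topological $B^4$), then $M(P(K)) \sim M(K)$ in $\mathcal{HC}^{ex}$ (resp.\ $\mathcal{HC}^{top}$). The plan is to follow the argument of \cite[Thm.\ 2.1]{CFHH} but to keep track of fundamental groups carefully so that the resulting homology cobordism $V$ has $\pi_1(V)$ normally generated by $\pi_1$ of each boundary component.

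\medskip

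\noindent\textbf{Construction of the cobordism.} First I would build a candidate cobordism $V$ between $M(K)$ and $M(P(K))$ directly. View the exterior of $P(K)$ as $(S^3 - N(K)) \cup_{\partial(ST)} (ST - N(P))$. Since $\widetilde P = P(U)$ is slice via $\Delta \subset B^4$, the complement $B^4 - N(\Delta)$ is a $4$-manifold with boundary the $0$-surgery $M(\widetilde P)$ (after the usual adjustment), and it carries a copy of $ST - N(P)$: indeed, removing a neighborhood of the slice disk is, near $S^3$, removing $N(\widetilde P)$, and the ``pattern'' solid torus $ST$ sits inside $S^3 - N(\widetilde P) = ST - N(P) \cup (S^3 - N(ST))$, wait --- more precisely, since $\eta$ is an unknot in $S^3$ bounding a disk $D$ meeting $\widetilde P$ algebraically $\pm 1$ times, one can form the ``relative'' object $X := (B^4 - N(\Delta)) \cup (D^2 \times D^2)$, attaching a $2$-handle along $\eta$ with an appropriate framing. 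Because $\eta$ links $\widetilde P$ exactly once (winding number $\pm 1$), $X$ is a $\Z$-homology $B^4$, and $\partial X$ is obtained from $S^3$ by removing the pattern torus and regluing --- this is where the $K$-exterior gets inserted. Concretely, I would take
\[
V := \bigl((B^4 - N(\Delta)) \times [0,1]\bigr) \ \cup\ (\text{$2$-handle pieces and a copy of }(S^3 - N(K))\times[0,1]),
\]
arranged so that $\partial V = -M(K) \sqcup M(P(K))$. The cleanest packaging, which I would adopt, is: let $E_K = S^3 - N(K)$; glue $E_K \times [0,1]$ to $(B^4 - N(\Delta))$ along $E_K \times \{1\} \hookrightarrow \partial(B^4 - N(\Delta))$ via the meridian-longitude identification (legitimate since $\eta$, the meridian of $ST$, is a meridian of $\widetilde P$-complement up to the winding-number-one relation), then add a round $1$-handle / $2$-handle to realize the $0$-surgery on both ends. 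A Mayer--Vietoris computation, using that $B^4 - N(\Delta)$ is a $\Z$-homology $S^1 \times B^3$ (resp.\ over $\Z[\tfrac1n]$ when relevant, but here $n = \pm 1$) and that $E_K$ is a $\Z$-homology (solid torus), shows $H_*(V, M(K); \Z) = 0$, so $V$ is a $\Z$-homology cobordism. In the topological category, $\Delta$ is only a topological disk, so $V$ is a topological homology cobordism.

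\medskip

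\noindent\textbf{The $\pi_1$-normal-generation condition.} This is the heart of the matter and the step I expect to be the main obstacle. I need $\pi_1(V)$ to be normally generated by the image of $\pi_1(M(K))$ and, separately, by the image of $\pi_1(M(P(K)))$. Van Kampen applied to the decomposition $V = (B^4 - N(\Delta))\times[0,1] \cup (\text{handles}) \cup E_K\times[0,1]$ expresses $\pi_1(V)$ as a quotient of the free product of $\pi_1(B^4 - N(\Delta))$ and $\pi_1(E_K)$ amalgamated over $\pi_1$ of a torus, plus a killed generator from the $2$-handle. Now use the hypotheses: $\pi_1(B^4 - N(\Delta))$ is normally generated by $\eta = \mu_{\widetilde P}$; the longitude $\lambda$ of $ST$ lies on $\partial E_K$, where it bounds (it's the $0$-framed longitude after surgery), hence is trivial or at least redundant; and $\eta$ is identified with $\mu_K$ in the gluing. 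Chasing these through --- exactly as in the proof of Proposition~\ref{prop:satstrongwinding}, which is the model calculation --- shows $\pi_1(V)$ is normally generated by $\mu_K \in \pi_1(M(K))$, hence by $\pi_1(M(K))$. For the other boundary component $M(P(K))$: its meridian $\mu_{P(K)}$ maps to $\eta'$, the meridian of the ambient solid torus $ST'$ containing $P(K)$, which (Figure~\ref{fig:strongwinding} and Proposition~\ref{prop:satstrongwinding}) is isotopic in $S^3 - P(K)$ to $\eta$, and $\eta$ normally generates $\pi_1(B^4 - N(\Delta))$ by hypothesis and normally generates $\pi_1(E_K)$'s contribution via the same redundancies; so $\pi_1(V)$ is normally generated by $\pi_1(M(P(K)))$ as well. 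The delicate point to verify carefully is that the $2$-handle attachment (needed to turn knot-exterior-unions into $0$-surgeries on both ends) does not destroy normal generation --- but the attaching circle is a meridian/longitude curve already in the normal closure, so it only imposes a relation, never adds a generator.

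\medskip

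\noindent\textbf{Assembling the conclusion.} Having produced $V$ as a smooth (resp.\ topological) $\Z$-homology cobordism from $M(K)$ to $M(P(K))$ with $\pi_1(V)$ normally generated by $\pi_1$ of each end, this is exactly the relation $\sim$ in $\mathcal{HC}^{ex}$ (resp.\ $\mathcal{HC}^{top}$) of Definition~\ref{def:hcrelations}; in the smooth case the ambient manifold $B^4 - N(\Delta)$ came from a possibly exotic $B^4$, which is precisely what $\mathcal{HC}^{ex}$ permits. Corollary~\ref{cor:CFHHmainstronger} then follows immediately: for a strong winding number $\pm 1$ pattern, $\mu_{\widetilde P}$ lies in the normal closure of $\eta$ in $\pi_1(S^3 - \widetilde P)$, and since $\mu_{\widetilde P}$ normally generates $\pi_1(B^4 - N(\Delta))$ (standard fact: the complement of any slice disk is normally generated by a meridian of the knot), the hypothesis of Theorem~\ref{thm:CFHHmainstronger} is met automatically. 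I would double-check the smooth-structure bookkeeping --- that $V$'s smooth structure is compatible with "homeomorphic to a product" on the relevant pieces --- but this is the same routine verification already used in Proposition~\ref{prop:sameoperators} and Proposition~\ref{prop:homcobordism}.
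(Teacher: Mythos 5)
Your deduction of the corollary from Theorem~\ref{thm:CFHHmainstronger} is exactly the paper's: $\mu_{\widetilde P}$ normally generates $\pi_1(B^4-\Delta)$ (correct, by van Kampen applied to regluing $N(\Delta)$), and strong winding number $\pm 1$ places $\mu_{\widetilde P}$ in the normal closure of $\eta$, so the extra $\pi_1$ hypothesis holds automatically. The gap lies in your proof of the theorem itself, specifically in the construction of the cobordism $V$. The gluing you describe --- attaching $E_K\times[0,1]$ to $B^4-N(\Delta)$ ``along $E_K\times\{1\}\hookrightarrow\partial(B^4-N(\Delta))$'' --- is not defined: $\partial(B^4-N(\Delta))$ is the zero-surgery $M(\widetilde P)$, and no embedding of the exterior of $K$ into it is given; $K$ has not entered the picture at that stage. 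The repairable version of your idea would be to splice $E_K\times[0,1]$ into the surgered concordance exterior along a neighborhood of an annulus joining $\eta\subset M(\widetilde P)$ at one end to the appropriate infection curve at the $M(K)$ end; but the slice disk supplies no such annulus. Pushing a meridian along $\Delta$ gives an annulus from $\mu_U$ to $\mu_{\widetilde P}$, not to $\eta$, and splicing there yields a cobordism to $M(\widetilde P\# K)$, not to $M(P(K))$. So the clause ``arranged so that $\partial V=-M(K)\sqcup M(P(K))$'' is precisely the point that is never established, and it is not routine.

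This is exactly where the paper's proof does its real work: it builds $W$ from $M(K)\times[0,1]$ by adding a $1$-handle, gluing on the homology cobordism from $S^1\times S^2\#M(K)$ to $M(\widetilde P)\#M(K)$ furnished by the slice disk, and then attaching a $0$-framed $2$-handle along the curve $\eta^{-1}\mu_K$; the winding number $\pm1$ hypothesis is what makes this $2$-handle kill the extra $H_1$, and the identification $\partial_+W\cong M(P(K))$ is obtained only after the handle slides over the $K$-handle and the slam-dunk move (Figures~\ref{fig:kirby2} and~\ref{fig:kirby3}), which also shows that $\mu_{P(K)}$ is isotopic to $\mu_{\widetilde P}$ and hence normally generates $\pi_1(W)$. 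Your $\pi_1$ bookkeeping (modeled on Proposition~\ref{prop:satstrongwinding}) is in the right spirit, but it is carried out on a cobordism that has not actually been constructed, and the step where $\eta$ gets identified with $\mu_K$ --- which is what makes $\pi_1$ normally generated by either meridian alone and what creates the satellite on the positive boundary --- is the missing $2$-handle attachment, not a consequence of a boundary-torus gluing. Until the identification of the positive boundary with $M(P(K))$ is proved, the argument is incomplete.
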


A special case of Theorem~\ref{thm:mainresult}  $b)$ and $c)$ (weak injectivity for certain operators) follows quickly:

\begin{cor}\label{cor:CFHHmainstronger2} Suppose $P$ is a pattern with strong winding number $\pm 1$ such that $\widetilde{P}$ is a pseudo-slice knot (respectively,  a topologically slice knot). Then
$$
P:\C^*\to\C^*
$$
is weakly injective for $*=ex$ (respectively for $*=top$).
\end{cor}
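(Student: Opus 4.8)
The statement to prove is Corollary~\ref{cor:CFHHmainstronger2}: if $P$ has strong winding number $\pm 1$ and $\widetilde{P}$ is pseudo-slice (resp.\ topologically slice), then $P:\C^*\to\C^*$ is weakly injective for $*=ex$ (resp.\ $*=top$). The strategy is to combine exactly two ingredients already in hand. First, Corollary~\ref{cor:CFHHmainstronger}, which upgrades the Cochran--Franklin--Hedden--Horn homology cobordism to one with the required $\pi_1$-normal-generation property, i.e.\ $M(P(K))\sim M(K)$ in $\mathcal{HC}^{ex}$ (resp.\ $\mathcal{HC}^{top}$). Second, the (weak) injectivity of the zero-surgery function $M$ on $\C^*$ established in Proposition~\ref{prop:homcobordism}.

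\begin{proof}[Proof of Corollary~\ref{cor:CFHHmainstronger2}]
We treat the case $*=ex$; the case $*=top$ is identical, replacing ``pseudo-slice'' by ``topologically slice'' and $\mathcal{HC}^{ex}$ by $\mathcal{HC}^{top}$ throughout, and invoking the corresponding statements of Corollary~\ref{cor:CFHHmainstronger} and Proposition~\ref{prop:homcobordism}.

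Suppose $P(K)=P(0)$ in $\C^{ex}$. Since $\widetilde{P}=P(U)=P(0)$ is pseudo-slice by hypothesis, $P(0)=0$ in $\C^{ex}$, and hence $P(K)=0$ in $\C^{ex}$. Applying the zero-surgery function $M:\C^{ex}\to\mathcal{HC}^{ex}$ of Diagram~\ref{diagram:zerosurgery}, we obtain $M(P(K))\sim M(0)$ in $\mathcal{HC}^{ex}$.

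On the other hand, because $P$ has strong winding number $\pm1$ and $\widetilde{P}$ is pseudo-slice, Corollary~\ref{cor:CFHHmainstronger} applies and gives $M(P(K))\sim M(K)$ in $\mathcal{HC}^{ex}$. Combining the two relations and using that $\sim$ is an equivalence relation on $\mathcal{M}$, we conclude $M(K)\sim M(0)$ in $\mathcal{HC}^{ex}$. By Proposition~\ref{prop:homcobordism}, the function $M$ is weakly injective, so $M(K)\sim M(0)$ forces $K=0$ in $\C^{ex}$. This proves that $P:\C^{ex}\to\C^{ex}$ is weakly injective.
\end{proof}

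The only genuine content here is imported wholesale: Corollary~\ref{cor:CFHHmainstronger} (itself a consequence of the strengthened homology-cobordism Theorem~\ref{thm:CFHHmainstronger}) and the weak injectivity of $M$ in Proposition~\ref{prop:homcobordism}, the latter resting on Freedman's topological Poincar\'e theorem. The remaining obstacle, if any, is purely bookkeeping: one must be slightly careful that ``pseudo-slice'' is precisely the condition $\widetilde{P}=0$ in $\C^{ex}$, so that $P(K)=P(0)$ really does mean $P(K)=0$; this is immediate from the definition of $\C^{ex}$ and the identity $P(0)=\widetilde{P}$ recorded in Section~\ref{sec:satellitesandwinding}. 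No calculation is required.
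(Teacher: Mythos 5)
Your proof is correct and follows the same route as the paper: use the hypothesis that $\widetilde{P}=P(0)$ is slice in the relevant category to get $M(P(K))\sim M(0)$, combine with Corollary~\ref{cor:CFHHmainstronger} to get $M(K)\sim M(0)$, and conclude $K=0$ via the weak injectivity of $M$ from Proposition~\ref{prop:homcobordism}. There is nothing to add.
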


\begin{proof}[Proof of Corollary~\ref{cor:CFHHmainstronger2}]  Suppose $P(K)= P(0)$ in $\C^*$. Since $P(0)=\widetilde{P}=0$, $P(K)=0$. Thus $M(P(K))\sim M(0)$ in $\mathcal{HC}^{*}$.  By Corollary~\ref{cor:CFHHmainstronger}, we also have $M(P(K))\sim M(K)$. Hence $M(K)\sim M(0)$ in $\mathcal{HC}^{*}$. Since the function $M$ is weakly injective by  Proposition~\ref{prop:homcobordism}, it follows that $K=0$ in $\C^*$.
\end{proof}

\begin{proof}[Proof of Theorem~\ref{thm:CFHHmainstronger}] The proof consists of following the proof of ~\cite[Thm. 2.1]{CFHH} while keeping track of $\pi_1$ of the homology cobordism. We will construct a homology cobordism $W$ between $M(K)$ and $M(P(K))$.  Begin with $M(K)\times[0,1]$ and add a $4$-dimensional $1$-handle to $M(K)\times \{1\}$ and let the resulting $4$-manifold be called $W_1$. Then $\partial_+ W_1\cong S^1\times S^2\#M(K)$ as depicted  in Figure~\ref{fig:kirby1} a) (this picture may seem to suggest a $2$-handle has been added along a zero-framed unknotted circle, but recall the latter also leads to a connected sum with $S^1\times S^2$ as the new boundary component- and it is only this $3$-manifold that is being portrayed).  Also shown  (dotted)  are $\mu$ and $\mu_K$, the meridians of the unknot and $K$ respectively. The set $\{\mu,\mu_K\}$ normally generates $\pi_1(W_1)$.
\begin{figure}[htbp]
\setlength{\unitlength}{1pt}
\begin{picture}(300,150)
\put(-45,20){\includegraphics{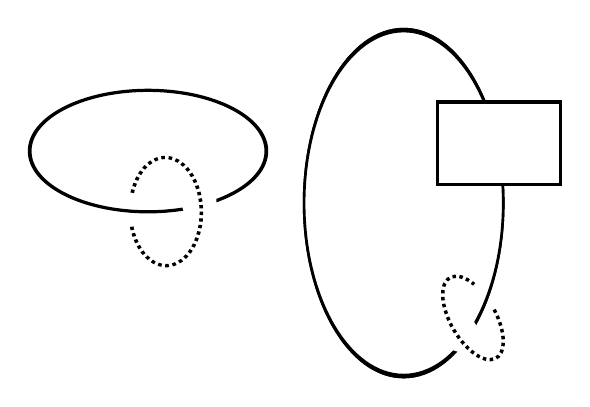}}
\put(145,20){\includegraphics{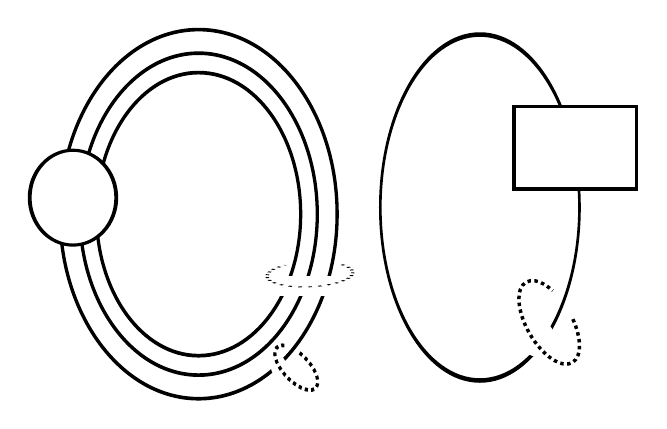}}
\put(93,92){$K$}
\put(5,53){$\mu$}
\put(212,63){$\eta$}
\put(5,120){$0$}
\put(45,120){$0$}
\put(165,120){$0$}
\put(258,125){$0$}
\put(305,98){$K$}
\put(95,25){$\mu_K$}
\put(228,23){$\mu_{\tilde{P}}$}
\put(162,83){$P$}
\put(305,29){$\mu_K$}
\put(-20,0){a) $\partial_+ W_1\cong S^1\times S^2~~\# ~~M(K)$}
\put(180,0){b) $\partial_+ W_2\cong M(\widetilde{P})~~\# ~~M(K)$}
\end{picture}
\caption{}\label{fig:kirby1}
\end{figure}
Since the unknot is smoothly (respectively, topologically) concordant to $\widetilde{P}$ (use the given slice disk $\Delta$), $M(U)$ is smoothly (respectively, topologically) homology cobordant to $M(\widetilde{P})$ via a cobordism whose fundamental group is normally generated by $\mu_{\tilde{P}}$ (alternatively by $\mu $).  Moreover, our additional $\pi_1$ condition implies that $\pi_1$ of this cobordism is normally generated by $\eta$ (see Figure~\ref{fig:kirby1} b) ). Recalling that $M(U)\cong S^1\times S^2$, we conclude that $ S^1\times S^2\# M(K)$ is homology cobordant to $M(\widetilde{P})\# M(K)$ via a cobordism $C$. Let $W_2=W_1\cup C$. Then $\partial_+ W_2=M(\widetilde{P})\# M(K)$ as depicted in Figure~\ref{fig:kirby1} b).    Note that $\pi_1(W_2)$ is normally generated by $\{\mu_K, \mu_{\tilde{P}}\}$, but is also normally generated by $\{\mu_K, \eta\}$.  Now we add a $0$-framed $2$ handle to $\partial_+ W_2$ along the solid zero-framed circle shown in Figure~\ref{fig:kirby2} a),  and call the result $W$. $\partial_+ W$ is depicted in Figure~\ref{fig:kirby2} a).
\begin{figure}[htbp]
\setlength{\unitlength}{1pt}
\begin{picture}(300,150)
\put(-45,20){\includegraphics{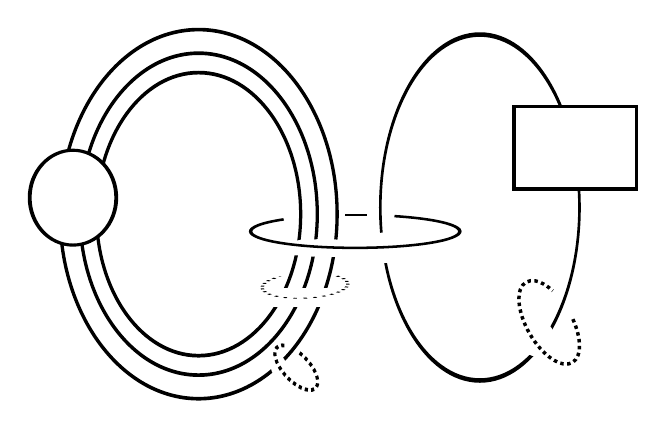}}
\put(175,20){\includegraphics{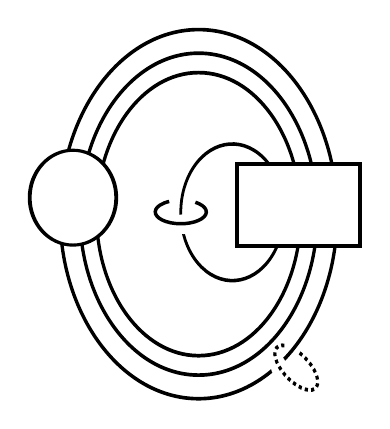}}
\put(114,97){$K$}
\put(24,54){$\eta$}
\put(115,25){$\mu_K$}
\put(-30,120){$0$}
\put(18,72){$0$}
\put(119,120){$0$}
\put(263,125){$0$}
\put(214,72){$0$}
\put(228,99){$0$}
\put(255,22){$\mu_{\tilde{P}}$}
\put(39,21){$\mu_{\tilde{P}}$}
\put(-28,83){$P$}
\put(191,83){$P$}
\put(256,81){$K$}
\put(30,0){a) $\partial_+W$}
\put(203,0){b) $\partial_+ W$}
\end{picture}
\caption{}\label{fig:kirby2}
\end{figure}
We claim that $W$ is the desired homology cobordism. Note that the $2$-handle is added along a loop isotopic to $\eta^{-1}\mu_K$. Since $P$ has winding number $\pm1$, $\eta$ is homologous to $\pm \mu_{\tilde{P}}$ in $\partial_+ W_2$.  Therefore the added $2$-handle equates the new $H_1$-generator $\mu_{\tilde{P}}$ with $\mu_K$. Hence  $H_*(W,M_K)=0$. It follows, by duality and the universal coefficient theorem,  that $W$ is a homology cobordism between $M(K)$ and $\partial_+ W$.

Moreover $\pi_1(W)$ is a quotient of $\pi_1(W_2)$ and hence is normally generated by $\{\mu_K, \eta\}$. Since $\mu_K=\eta$ in $\pi_1(W)$, $\pi_1(W)$ is normally generated by $\mu_K$ alone. We also claim that $\pi_1(W)$ is normally generated by $\mu_{\tilde{P}}$. For certainly $\eta$ is in the normal closure of $\mu_{\tilde{P}}$, and hence $\mu_K$ is also. 

Finally we show that $\partial_+ W\cong M(P(K))$ using the calculus of framed links ~\cite[p.264]{R}. Begin with the framed link description of $\partial_+ W$ given in Figure~\ref{fig:kirby2} a). First ``slide'' each strand of $\widetilde{P}$ that passes through $\eta$ over the $2$-handle marked with $K$ ~\cite{K}. From the point of view of $3$-manifolds this is merely a sequence of isotopies. The result is shown in Figure~\ref{fig:kirby2} b). This shows that the $3$-manifolds depicted in Figures~\ref{fig:kirby2} a) and b) are homeomorphic.  Now we show that the two smaller zero-framed circles can be eliminated entirely resulting in the desired $3$-manifold in Figure~\ref{fig:kirby3}. This is an instance of the so-called slam-dunk move on framed links ~\cite[p.501]{CGompf}. In this move the smallest zero-framed circle is eliminated and the framing of the other circle changed to $-\infty$, which means a solid torus is cut out and replaced in an identical fashion. (For those more adept with the calculus of $4$-manifolds, this can also be justified by changing the smallest $2$-handle to a $1$-handle and then cancelling this $1$-handle with the other $2$-handle.) Moreover it is clear from the proof of the slam-dunk move that the homeomorphism from Figure~\ref{fig:kirby2} b) to Figure~\ref{fig:kirby3} is supported in a neighborhood of the two small circles.  Hence the circle labelled $\mu_{P(K)}$ in Figure~\ref{fig:kirby3}  is isotopic to $\mu_{\tilde{P}}$ in Figure~\ref{fig:kirby2} a), so $\mu_{P(K)}$ normally generates $\pi_1(W)$ as required. Thus $M(K)\sim M(P(K))$ in $\mathcal{HC}^{ex}$ (respectively, in $\mathcal{HC}^{top}$ ).
\begin{figure}[htbp]
\setlength{\unitlength}{1pt}
\begin{picture}(300,125)
\put(85,0){\includegraphics{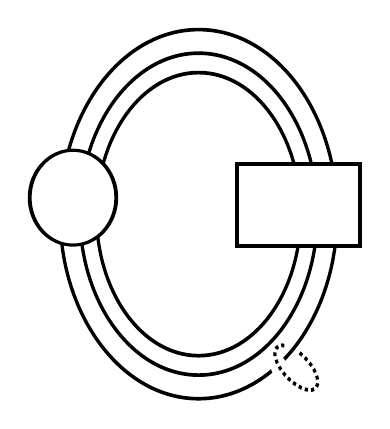}}

\put(165,61){$K$}
\put(100,62){$P$}
\put(180,9){$\mu_{P(K)}$}
\put(168,115){$0$}
\end{picture}
\caption{$\partial_+ W\cong M(P(K))$}\label{fig:kirby3}
\end{figure}

\end{proof}

\section{The proof of Theorem~\ref{thm:mainresult}  }\label{sec:proof}

\begin{thm}\label{thm:mainresult}  Suppose $P$ is a pattern with non-zero winding number $n$. Then
\begin{itemize}
\item [a.]  $P:\C^{\frac{1}{n}}\to \C^{\frac{1}{n}}$ is an injective function.\\
If $P$ is a pattern with strong winding number $\pm 1$ then 
\item [b.]  $P:\C^{ex}\to \C^{ex}$ is an injective function,
\item [c.]   $P:\C^{top}\to\C^{top}$ is an injective function; and
\item [d.]  if $S^4$ has a unique smooth structure (up to diffeomorphism) then $P:\C\to\C$ is an injective function.
\end{itemize}
\end{thm}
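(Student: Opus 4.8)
The plan is to deduce full injectivity from the weak-injectivity statements of Section~\ref{sec:satellitesandhomologycobordism}---Corollary~\ref{cor:CFHH} for $\C^{\frac{1}{n}}$, and Corollary~\ref{cor:CFHHmainstronger2} for $\C^{ex}$ and $\C^{top}$---by extracting, from a hypothetical equality $P(K)=P(J)$, an auxiliary satellite operator to which those results apply. So suppose $P(K)=P(J)$ in $\C^*$; we must show $K=J$, equivalently that $L:=(-J)\#K$ vanishes in $\C^*$. The operator we use is
\[
\mathcal{S}\colon X\longmapsto P(J\#X)\,\#\,\bigl(-P(J)\bigr),
\]
that is, the operator $P(J)$ (in the sense of Lemma~\ref{lem:A}) followed by the connected-sum operator determined by the fixed knot $-P(J)$. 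It is a satellite operator---represented by the pattern for $P(J)$ with a local copy of $-P(J)$ tied into it---and $\widetilde{\mathcal{S}}=\mathcal{S}(U)=P(J)\#\bigl(-P(J)\bigr)$.

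This choice does three things for us. First, $\widetilde{\mathcal{S}}=P(J)\#(-P(J))$ is smoothly slice (it bounds the standard disk for $c\#(-c)$ in $B^4$), hence also pseudo-slice, topologically slice, and slice in a $\Z[\frac{1}{n}]$-homology ball. Second, since $J\#(-J)$ is a slice knot, Lemma~\ref{lem:A} gives
\[
\mathcal{S}(L)=P\bigl((J\#-J)\#K\bigr)\#\bigl(-P(J)\bigr)=P(K)\#\bigl(-P(J)\bigr)\qquad\text{in }\C^*,
\]
which by the hypothesis $P(K)=P(J)$ equals $P(J)\#(-P(J))=0=\mathcal{S}(0)$ in $\C^*$; thus $\mathcal{S}(L)=\mathcal{S}(0)$. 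Third, $\mathcal{S}$ has winding number $n$ ($P(J)$ has the winding number of $P$, namely $n$, and the connected-sum operator has winding number one), and whenever $P$ has strong winding number $\pm1$ so does $\mathcal{S}$: the operator $P(J)$ has strong winding number $\pm1$ by Proposition~\ref{prop:satstrongwinding}, and postcomposing a strong winding number $\pm1$ operator $Q$ with a connected-sum operator preserves this property. Indeed, writing $E(\cdot)$ for a knot exterior, one decomposes the exterior of the connected sum as $E(\widetilde{Q}\#c)=E(\widetilde{Q})\cup_{A}E(c)$ along meridional annuli, so by van~Kampen the normal closure in $\pi_1 E(\widetilde{Q}\#c)$ of the solid-torus meridian of $Q$ already contains $\pi_1 E(\widetilde{Q})$ (by the strong winding number hypothesis on $Q$), in particular the meridian class $\mu$, which normally generates $\pi_1 E(c)$; hence it normally generates all of $\pi_1 E(\widetilde{Q}\#c)$.

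With these facts the theorem follows at once. In part (a), $\mathcal{S}$ has winding number $n$ and $\widetilde{\mathcal{S}}$ is slice in a $\Z[\frac{1}{n}]$-homology ball, so $\mathcal{S}\colon\C^{\frac{1}{n}}\to\C^{\frac{1}{n}}$ is weakly injective by Corollary~\ref{cor:CFHH}; then $\mathcal{S}(L)=\mathcal{S}(0)$ forces $L=0$, i.e.\ $K=J$ in $\C^{\frac1n}$. In parts (b) and (c), $\mathcal{S}$ has strong winding number $\pm1$ and $\widetilde{\mathcal{S}}$ is pseudo-slice (resp.\ topologically slice), so $\mathcal{S}$ is weakly injective on $\C^{ex}$ (resp.\ $\C^{top}$) by Corollary~\ref{cor:CFHHmainstronger2}, and again $L=0$. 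Part (d) follows from (b): the hypothesis that $S^4$ has a unique smooth structure is the smooth $4$-dimensional Poincar\'{e} Conjecture, under which $\C=\C^{ex}$.

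The only substantive work, I expect, lies in the verifications of the third paragraph: confirming that $\mathcal{S}$ is a genuine satellite operator with $\widetilde{\mathcal{S}}$ equal to $P(J)\#(-P(J))$ exactly (so the desired slice disk really exists), and extending Proposition~\ref{prop:satstrongwinding}---which as stated treats operators of the form $P(K)$ but not composites with a connected-sum operator---so as to conclude that $\mathcal{S}$ inherits strong winding number $\pm1$. Everything else is formal bookkeeping with Lemma~\ref{lem:A} and the relation $(-J)\#J=0$ in $\C^*$.
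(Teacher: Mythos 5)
Your proposal is correct and takes essentially the same route as the paper: the paper forms the auxiliary operator $R=-[P(K)]\#P(K)$ and applies it to $-K\#J$, while you form $P(J)\#\bigl(-P(J)\bigr)$ and apply it to $-J\#K$, in both cases verifying the slice condition on the new pattern, checking strong winding number via Proposition~\ref{prop:satstrongwinding} together with the amalgamated-product (van Kampen) argument for the local summand, and concluding by the weak injectivity results of Corollaries~\ref{cor:CFHH} and~\ref{cor:CFHHmainstronger2}. The only cosmetic difference is part (d), where you invoke $\C=\C^{ex}$ under the smooth Poincar\'e Conjecture (as asserted in the introduction) rather than rerunning the argument through the last sentence of Proposition~\ref{prop:homcobordism}; both rest on the same Schoenflies-type input.
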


\begin{cor}\label{cor:mainresult1}   Suppose $P$ is a pattern with winding number $\pm 1$. Then $P(K)$ is smoothly concordant to $P(J)$ in a smooth homology $S^3\times [0,1]$ if and only if $K\#-J$ is smoothly slice in a smooth homology $B^4$.
\end{cor}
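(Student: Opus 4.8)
The plan is to prove the sharper assertion of Theorem~\ref{thm:mainresult}~a) for an arbitrary non-zero winding number $n$ --- that $P$ is injective on the group $\C^{\frac{1}{n}}$ of smooth concordance classes of knots in smooth $\Z[\frac{1}{n}]$-homology $S^3\times[0,1]$'s, whose identity is $0=[U]$ and in which $-K=r\overline{K}$ --- and then to read off the Corollary as the case $n=1$. So throughout, ``$X$ is slice'' will mean ``$X$ bounds a smoothly embedded disk in a smooth $\Z[\frac{1}{n}]$-homology $B^4$'' and ``$=$'' will mean equality in $\C^{\frac{1}{n}}$; since $\C^{\frac{1}{n}}$ is a group, $K\#-J$ is slice in this sense if and only if $K=J$.

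First I would dispose of the easy direction, which uses nothing about winding number: if $K\#-J$ is slice then $K=J$, and since satellite operators descend to $\C^{\frac{1}{n}}$ (Section~\ref{sec:satellitesandwinding}) it follows that $P(K)=P(J)$.

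For the converse, assume $P(K)=P(J)$. Applying Corollary~\ref{cor:CFHH} to $P$ itself would require $\widetilde P=P(U)$ to be slice, which fails in general; the idea is first to convert the hypothesis, via Lemma~\ref{lem:A}, into a statement about the ``difference'' $L:=K\#-J$, and then to pass to an auxiliary operator whose defining knot \emph{is} slice. By Lemma~\ref{lem:A}, $P(J\#X)=P(J)(X)$, with $P(J)$ regarded as a pattern (of winding number $n$). Taking $X=L$ and using that $J\#K\#-J$ is concordant to $K$ (cancel a ribbon disk for $J\#-J$) together with the fact that operators descend to $\C^{\frac{1}{n}}$, we obtain $P(J)(L)=P(K)=P(J)=P(J)(U)$. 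Thus the operator $P(J)$ already satisfies $P(J)(L)=P(J)(U)$; its only defect is that its defining knot $P(J)(U)=P(J)$ need not be slice. I would cure this by composing with a connected-sum operator, setting
\[
Q(X):=P(J)(X)\#-P(J).
\]
Being the composite of $X\mapsto P(J)(X)$ with the winding number one operator $X\mapsto X\#(-P(J))$, $Q$ is a satellite operator, of winding number $n\cdot 1=n\neq 0$, and its defining knot $\widetilde Q=Q(U)=P(J)\#-P(J)$ has the form $C\#-C$, hence bounds a ribbon disk in $B^4$ and in particular is slice. So by Corollary~\ref{cor:CFHH}, $Q\colon\C^{\frac{1}{n}}\to\C^{\frac{1}{n}}$ is weakly injective. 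But $Q(L)=P(J)(L)\#-P(J)=P(J)\#-P(J)=Q(U)=Q(0)$, so weak injectivity forces $L=K\#-J=0$, that is, $K\#-J$ is slice. Setting $n=1$ is exactly the Corollary.

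The step I expect to be the crux is finding $Q$: the realization that one should \emph{not} try to verify the hypotheses of Corollary~\ref{cor:CFHH} for $P$ itself, but instead use Lemma~\ref{lem:A} to isolate $L=K\#-J$ and a $C\#-C$ connected sum to force sliceness, manufacturing a \emph{new} operator to which the already-established weak injectivity of slice operators applies. Once $Q$ is written down, the remaining points --- that $Q$ is genuinely a satellite operator, that winding numbers multiply under composition, that concentrating a local knot $C$ corresponds to a winding number one operator, and that $C\#-C$ is ribbon --- are routine.
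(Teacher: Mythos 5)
Your proof is correct and follows essentially the same route as the paper: the paper deduces the corollary as the $n=1$ case of Theorem~\ref{thm:mainresult}~a), whose proof likewise uses Lemma~\ref{lem:A} to rewrite the hypothesis as the vanishing of an auxiliary operator of the form $-[P(K)]\#P(K)(\cdot)$ applied to $-K\#J$, notes that this operator's underlying knot is ribbon and its winding number equals that of $P$, and then invokes the weak injectivity of slice operators (Corollary~\ref{cor:CFHH}). Your $Q(X)=P(J)(X)\#-P(J)$ is just the mirror-image labeling of the paper's pattern $R$, so the arguments coincide.
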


The case $p=2$ of the following corollary was proved previously by the third author ~\cite{Aru1}. 

\begin{cor}\label{cor:mainresult2} If $p$ and $q$ are coprime integers then the $(p,q)$ cable of $K$ is  smoothly concordant to the $(p,q)$ cable of $J$ in a smooth $\Z[\frac{1}{p}]$-homology $S^3\times [0,1]$ if and only if $K$ is smoothly concordant to $J$ in a smooth $\Z[\frac{1}{p}]$-homology $S^3\times [0,1]$.
\end{cor}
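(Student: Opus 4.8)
The plan is to obtain this statement as an immediate consequence of part~$a.$ of Theorem~\ref{thm:mainresult}. The first step is to recognize $(p,q)$-cabling as a satellite operation: let $P\subset ST$ be the $(p,q)$-torus knot drawn on the boundary of a smaller solid torus concentrically embedded in $ST$. Since $\gcd(p,q)=1$ this curve is connected and geometrically essential, and the induced satellite operator $K\mapsto P(K)$ is precisely the (untwisted) $(p,q)$-cable of $K$. This curve meets a meridional disk of $ST$ geometrically in $|p|$ points with coherent signs, so $P$ has winding number $n=p\neq 0$. Replacing $p$ by $-p$ changes $P$ only by an orientation reversal and does not change the subring $\Z[\frac1p]=\Z[\frac1{|p|}]$, so we may assume $p>0$ (the case $p=1$ being trivial, since then the cable is $K$ itself and $\Z[\frac1p]=\Z$).

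Next I would simply invoke Theorem~\ref{thm:mainresult}~$a.$ with $n=p$, which asserts that the function $P:\C^{\frac1p}\to\C^{\frac1p}$ is injective. For the ``only if'' direction, suppose the $(p,q)$-cable of $K$ is smoothly concordant to the $(p,q)$-cable of $J$ in a smooth $\Z[\frac1p]$-homology $S^3\times[0,1]$; unwinding the definition of $\C^{\frac1p}$, this says exactly that $P(K)=P(J)$ in $\C^{\frac1p}$, so injectivity yields $K=J$ in $\C^{\frac1p}$, i.e. $K$ and $J$ are smoothly concordant in a smooth $\Z[\frac1p]$-homology $S^3\times[0,1]$. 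For the ``if'' direction I would use the fact, recorded in Section~\ref{sec:satellitesandwinding}, that any satellite operator descends to a well-defined function on $\C^{\frac1p}$; hence $K=J$ in $\C^{\frac1p}$ immediately forces $P(K)=P(J)$ in $\C^{\frac1p}$, which is the asserted concordance of cables.

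There is essentially no serious obstacle once Theorem~\ref{thm:mainresult} is available; the only points that require (entirely routine) care are the identification of classical $(p,q)$-cabling with the satellite operator of the torus-knot pattern, the verification that this pattern has winding number exactly $p$, and the bookkeeping that ``concordant in a smooth $\Z[\frac1p]$-homology $S^3\times[0,1]$'' is literally the equivalence relation defining $\C^{\frac1p}$. In particular, nothing in the proof of this corollary needs the strong-winding-number hypothesis or the four-dimensional Poincar\'e Conjecture; only the $\Z[\frac1n]$ case~$a.$ is used.
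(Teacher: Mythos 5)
Your proposal is correct and is essentially the paper's own argument: the paper gives no separate proof of this corollary, treating it exactly as you do — as the specialization of Theorem~\ref{thm:mainresult}~$a.$ to the $(p,q)$-torus-knot pattern, which has non-zero winding number $p$, with the converse direction following from the well-definedness of satellite operators on $\C^{\frac{1}{p}}$. Your added bookkeeping about signs of $p$ and the trivial case $p=1$ is fine and does not change the route.
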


\begin{proof}[Proof of Theorem~\ref{thm:mainresult}]  We present a unified proof for parts a), b), and c) of the theorem. We assume that $P$ has non-zero winding number $n$ and in the cases that $*=ex$ and $*=top$ we assume additionally that $P$ has strong winding number $\pm 1$.  Then, under the assumption that $P(K)=P(J)$ in $\C^*$, we will show that $K=J$ in $\C^*$. In fact we will quickly reduce the proof to the special case that $\widetilde{P}=0$ and $J=0$, whose veracity was already established by Corollaries~\ref{cor:CFHH} and ~\ref{cor:CFHHmainstronger2}.

Since $K\#-K$ is smoothly slice, $K\#-K=0$ in $\C^*$ so
$$
P(J)=P(K\#\left(-K\#J\right)).
$$
The right-hand side of this equation can be re-expressed, using Lemma~\ref{lem:A}, as
\begin{equation}\label{eq:proof1}
P(J)=P(K)\left(-K\#J\right).
\end{equation}
By assumption $P(K)=P(J)$ and so, since $\C^*$ is a group, 
$$
-[P(K)]\#P(J)=0.
$$
Substituting for $P(J)$ using Equation~(\ref{eq:proof1}), we have
\begin{equation}\label{eq:proof2}
-[P(K)]\# [P(K)(-K\#J)]=0.
\end{equation}
A picture of the connected-sum of two knots on the left-hand side of Equation~(\ref{eq:proof2}) is shown in Figure~\ref{fig:proof2}. The particular form we have pictured for the $-[P(K)]$ summand is not important. This form will not be used.
\begin{figure}[htbp]
\setlength{\unitlength}{1pt}
\begin{picture}(302,180)
\put(-55,0){\includegraphics{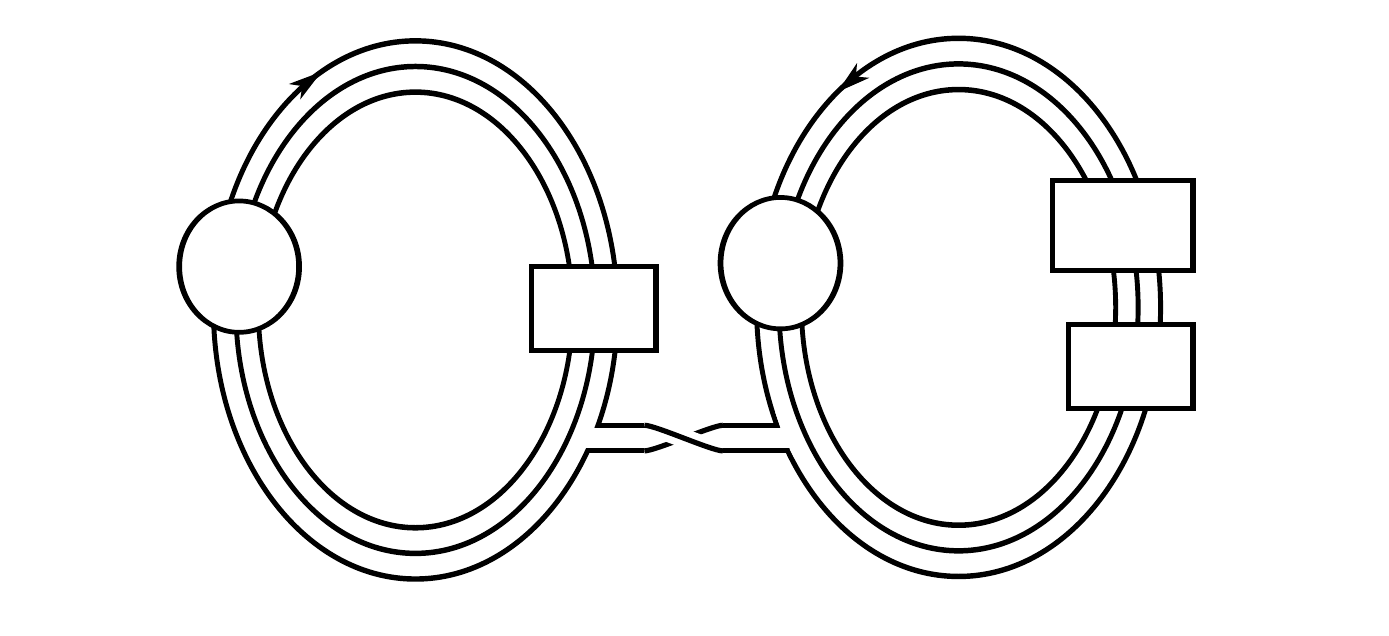}}

\put(8,104){$\overline{P}$}
\put(165,105){$P$}
\put(265,74){$K$}
\put(252,115){$-K\#J$}
\put(112,91){$\overline{K}$}
\end{picture}
\caption{}
\label{fig:proof2}
\end{figure}

\begin{figure}[htbp]
\setlength{\unitlength}{1pt}
\begin{picture}(302,210)
\put(-27,10){\includegraphics{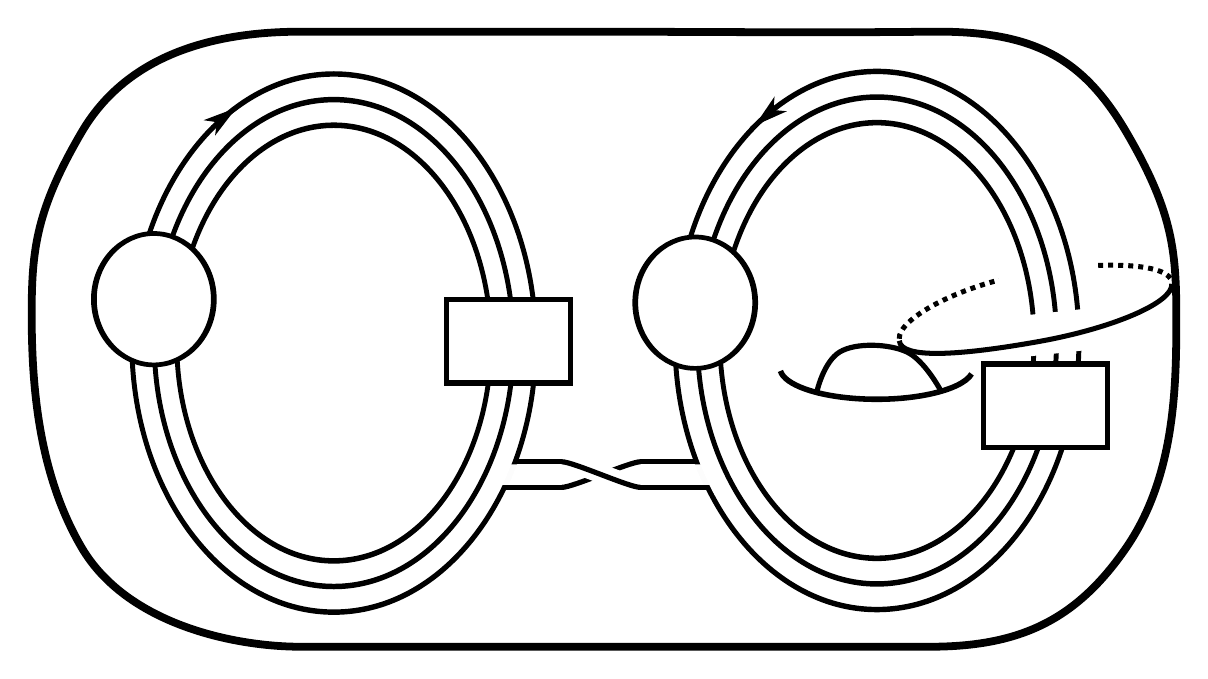}}
\put(13,114){$\overline{P}$}
\put(169,115){$P$}
\put(320,122){$\eta_R$}
\put(267,85){$K$}
\put(114,103){$\overline{K}$}
\end{picture}
\caption{The operator $R=-[P(K)]\# P(K)$}
\label{fig:proof3}
\end{figure}
Let $R$ be the pattern knot shown in Figure~\ref{fig:proof3}. In terms of this operator Equation~(\ref{eq:proof2}) becomes
\begin{equation}\label{eq:proof3}
R(-K\#J)=0.
\end{equation}
Furthermore observe that $\widetilde{R}=-P(K)\#P(K)$ is a ribbon knot hence a smoothly slice knot. Also note that the winding number of $R$ is the same as that of $P$, and hence is non-zero. Now, in the case $*=\frac{1}{n}$, Corollary~\ref{cor:CFHH}  can be applied to the operator $R$ (combined with Equation~\ref{eq:proof3}) to conclude that $-K\#J=0$ in $\C^*$, from which it follows that $K=J$, as required.

In the cases $*=ex$ and $*=top$ we are assuming that $P$ has strong winding number $\pm 1$. We claim that $R$ also has strong winding number $\pm 1$. The verification of this is postponed momentarily. Assuming this, Corollary~\ref{cor:CFHHmainstronger2}  can be applied to the operator $R$ (combined with Equation~\ref{eq:proof3}) to conclude that $-K\#J=0$ in $\C^*$, from which it follows that $K=J$, as required.

 We claim that if $P$  has strong winding number $\pm 1$  then so does $R$. To verify this we must show that the meridian of the knot $\widetilde{R}$, lies in the normal closure of the meridian, $\eta_R$, of the solid torus in Figure~\ref{fig:proof3}. Since the knot group of a connected-sum of knots is a free product of the groups of the factors amalgamated along the meridians, it suffices to show that the meridian of the knot $P(K)$ lies in the normal closure of $\eta_R$. This follows from Proposition~\ref{prop:satstrongwinding}.

This completes the proof of parts a)-c) of Theorem~\ref{thm:mainresult}.  For part d), re-do the proof in the smooth category. The only place where smooth structure becomes an issue is at the very end of the proofs of Corollaries~\ref{cor:CFHH} and ~\ref{cor:CFHHmainstronger2} where we apply Proposition~\ref{prop:homcobordism}. Since the latter  holds in the category $\C$ if the smooth $4$-dimensional Poincar\'{e} Conjecture is true, we are done.
\end{proof}

\section{Extensions to string links and links}\label{sec:stringlinks}

Recall that a (pure) string link $L$ with $m$ components is a smooth proper embedding of a disjoint union of $m$ copies of the oriented unit interval (called strings) into $D^2\times [0,1]$ such that the endpoints of the $i^{th}$ string are sent to a fixed pair of points, say, $((i/2m,0),0)$ and $((i/2m,0),1)$. This is similar to a pure braid without being level-preserving. One can multiply two such string links by the obvious stacking procedure and the identity is the trivial string link with $m$ components. The \textbf{closure} of a string link is the ordered oriented link in $S^3$ obtained by using a trivial string link to identify the top and bottom of the string link. One can also define concordance between string links and so arrive at groups (non-abelian if $m>1$)  of equivalence classes, $\mathcal{C}_m^*$,  just as for knots as discussed in Section~\ref{sec:intro}. In fact $\C_1 \cong \C$. In this section we show how to get many different embeddings
$$
\C^*\hookrightarrow \mathcal{C}_m^*
$$
for $*=\frac{1}{n}$ using generalizations of satellite operators.

Let $\mathcal{L}$, a \textbf{string link operator}, denote a pair $(L,\eta)$ where $L$ is a string link and  $\eta$ is an oriented circle embedded in the exterior of $L$ that is unknotted in $D^2\times [0,1]$. Then given an oriented knot $K$, we can define a new string link, $\cL(K)$, called the \textbf{result of infection on $L$ by $K$ along $\eta$},  as follows. Remove an open tubular neighborhood of $\eta$ from $D^2\times [0,1]$, and replace it by $S^3-\nu(K)$, identifying the meridian of $\nu(\eta)$ with the inverse of a longitude of $K$ and identifying the longitude of $\nu(\eta)$ with the meridian of $K$. After noticing that the resulting manifold is diffeomorphic relative boundary to $D^2\times [0,1]$, we define $\cL(K)$ as the image of $L$ under this diffeomorphism. This correspondence  descends to give a well-defined \textbf{infection operator} or \textbf{string link operator}:
$$
\cL:\C^*\to \mathcal{C}_m^*.
$$
These functions are rarely homomorphisms. We will employ our techniques to find many different examples where such operators are injective. But first we note that there are some obvious such injective functions that  arise, for example,  by taking $\eta$ to be a meridian of the $i^{th}$ component of an arbitrary string link $L$. This operator has the effect of tying a local knot into the $i^{th}$ string and is easily seen to be injective.

Define the \textbf{winding vector} of $\cL$ to be the $m$-tuple $\vec{w}(\cL)=(w_1,...,w_m)$  where $w_i$ is the algebraic number of intersections between the $i^{th}$ string and the disk spanning $\eta$ (the ``linking number'' of the $i^{th}$ string with $\eta$). Define the \textbf{winding number} of $\cL$, denoted $n(\cL)$, to be the greatest common divisor of the coordinates of $\vec{w}(\cL)$ (a positive integer if $\vec{w}\neq \vec{0})$.  

\begin{thm}\label{thm:stringlinkresult}  Suppose $\cL$ is a string link operator with non-zero winding number $n$. Then 
$\cL:\C^{\frac{1}{n}}\to \C_m^{\frac{1}{n}}$ is an injective function. In particular if $n(\cL)=1$ then $\cL(K)$ is concordant to $\cL(J)$ in a homology $D^2\times [0,1]\times [0,1]$ if and only if $K$ is concordant to $J$ in a homology $S^3\times [0,1]$.
\end{thm}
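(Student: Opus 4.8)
The plan is to imitate the proof of Theorem~\ref{thm:mainresult}, parts a)--c), essentially verbatim, replacing ``satellite operator'' by ``string link operator'' and replacing the zero-surgery manifold $M(K)$ of a knot by the appropriate closed $3$-manifold associated to a string link operator via surgery. First I would set up the string-link analogue of the machinery of Section~\ref{sec:homologycobordism}: given a string link operator $\cL=(L,\eta)$ and a knot $K$, the result of infection $\cL(K)$ is, by construction, obtained from $L$ by gluing in $S^3-\nu(K)$ along $\eta$, and this only changes $3$-manifold-theoretic data through the exterior of $K$. The key input is the string-link analogue of Theorem~\ref{thm:CFHHmainsatelliteresult}/Corollary~\ref{cor:CFHH}: if $\cL$ has non-zero winding number $n$ and the ``unknotting'' $\cL(U)$ is the trivial string link, then $\cL(K)$ is concordant to $\cL(J)$ implies $K=J$ in $\C^{\frac1n}$. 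The proof of Theorem~\ref{thm:CFHHmainstronger} (tracking handle slides, the slam-dunk move, and the $\pi_1$-normal-generation) is insensitive to the fact that we are working with a string link rather than a knot pattern, because the relevant $4$-manifold $W$ is built by the same sequence of handle attachments over $M(K)\times[0,1]$; only the ambient $3$-manifold on the other boundary component changes. So I would state and prove this lemma by citing the construction of Theorem~\ref{thm:CFHHmainstronger} with cosmetic modifications.

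The reduction to the ``$\cL(U)$ trivial'' case goes exactly as in the proof of Theorem~\ref{thm:mainresult}: using the string-link analogue of Lemma~\ref{lem:A} (namely $\cL(A\#B)=\cL(A)(B)$, which holds because infection by a connected sum equals infection by $A$ followed by infection by $B$ along a parallel pushoff of $\eta$, and the resulting object $\cL(A)$ is again a string link operator), one writes $\cL(J)=\cL(K)(-K\#J)$, subtracts $\cL(K)$ using the group structure on $\C_m^*$, and forms the composite operator $\R=-[\cL(K)]\cdot\cL(K)$ whose ``closure'' is a ribbon (hence slice) link, and whose winding number equals that of $\cL$, hence is non-zero. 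Applying the string-link Corollary~\ref{cor:CFHH} to $\R$ and the hypothesis $\R(-K\#J)=0$ yields $-K\#J=0$ in $\C^{\frac1n}$, i.e.\ $K=J$. One mild subtlety: since $\C_m^*$ is non-abelian for $m>1$, I would be careful to phrase everything in terms of stacking on a fixed side (left multiplication by $-[\cL(K)]$), but this is purely bookkeeping and the argument is unaffected because the infected strings only differ from those of $\cL(K)$ in the inserted $K$-exterior.

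The main obstacle, and the only genuinely new point, is the last sentence of the theorem: the ``if and only if'' reformulation for $n=1$. The backward direction ($K$ concordant to $J$ in a homology $S^3\times[0,1]$ implies $\cL(K)$ concordant to $\cL(J)$ in a homology $D^2\times[0,1]\times[0,1]$) requires noting that infection is natural under concordance-in-a-homology-cobordism: one performs the infection construction fiberwise over the annular concordance from $K$ to $J$, and must verify that the resulting $4$-manifold is a homology $D^2\times[0,1]\times[0,1]$ (rel an appropriate part of the boundary) --- this is a Mayer--Vietoris computation identical in spirit to the one in Proposition~\ref{prop:homcobordism}. The forward direction is what the bulk of the proof establishes. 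I would also double-check that ``concordant in a homology $D^2\times[0,1]\times[0,1]$'' is the correct target notion matching $\C_m^{\frac1n}$, i.e.\ that the string-link concordance group $\C_m^{\frac1n}$ is defined with exactly this relative-homology-cobordism relation; granting the definitions set up earlier, the statement then follows by specializing the general injectivity assertion to $n=1$ and unwinding what $-K\#J=0$ in $\C^{\frac11}=\C$-with-$\Z$-homology-cobordism means.
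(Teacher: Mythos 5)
Your reduction step (forming $\R=-[\cL(K)]*[\cL(K)]$ via the string-link version of Lemma~\ref{lem:A} and the group structure on $\C_m^*$) matches the paper exactly. The gap is in the key lemma you then invoke: you assert that the homology-cobordism input --- the string-link analogue of Theorem~\ref{thm:CFHHmainsatelliteresult}/Theorem~\ref{thm:CFHHmainstronger} --- follows from the same handle construction ``with cosmetic modifications,'' because ``only the ambient $3$-manifold on the other boundary component changes.'' That is precisely where the argument breaks. For a string link operator with $m>1$ strands, the natural surgery manifold at the other end (zero surgery on the $m$-component closure of $\cL(K)$ or of $\R(-K\#J)$) has $H_1$ of rank $m$, while $M(K)$ has $H_1\cong\Z$; no $\Z[\frac{1}{n}]$-homology cobordism between them can exist, and in the construction of Theorem~\ref{thm:CFHHmainstronger} the single $2$-handle attached along $\eta^{-1}\mu_K$ can only kill one class, so $H_*(W,M(K))\neq 0$. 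Moreover your sketch never uses the actual hypothesis, which is that the winding \emph{vector} $(w_1,\dots,w_m)$ has greatest common divisor $n$; e.g.\ for $\vec{w}=(2,3)$, $n=1$, there is no single strand realizing the winding number, and no direct transplant of the knot-pattern cobordism sees this Bezout condition. You would also need a link version of Proposition~\ref{prop:homcobordism}, which does not come for free.

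The paper's proof of Proposition~\ref{prop:stringweak} supplies exactly the missing idea, and it is not a cosmetic modification but a reduction to the knot case: choose integers $k_i$ with $\sum k_iw_i=n$, replace the $i$th string of $R$ by $k_i$ untwisted parallel copies (dropping strings with $k_i=0$, reversing orientation when $k_i<0$), and then fuse all resulting components by bands disjoint from $\eta$ to obtain a genuine knot pattern $(P,\eta)$ of winding number exactly $n$. Since being zero in $\C_m^{\frac{1}{n}}$ is preserved by taking parallels, and fusing components of a slice link gives a slice knot, both $\widetilde{P}$ and $P(K)$ are zero in $\C^{\frac{1}{n}}$, and Corollary~\ref{cor:CFHH} for knot patterns finishes the proof. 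Without this parallels-and-fusions step (or some substitute that genuinely handles the rank-$m$ homology and the gcd condition), your proposed proof does not go through; the final ``if and only if'' sentence, which you single out as the main new point, is by contrast just the well-definedness of the induced operator on concordance classes together with the injectivity statement specialized to $n(\cL)=1$.
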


\begin{proof} The first part of the proof is the same as that of Theorem~\ref{thm:mainresult}. Suppose that $\cL(K)=\cL(J)$ in $\C^{\frac{1}{n}}_m$.  Since the latter is a group, we quickly deduce
\begin{equation}\label{eq:linkproof1}
-[\cL(K)]~*~[\cL(K)(-K\#J)]=0,
\end{equation}
where here $*$ means string link multiplication and the minus sign denotes the inverse in $\C_m^{\frac{1}{n}}$.  As in the proof of Theorem~\ref{thm:mainresult}, we can define an infection operator $\R=(R,\eta)$ where
$$
R=-[\cL(K)]~* ~[\cL(K)]
$$
is a smoothly slice string link and 
$$
\R(-K\# J)=0.
$$
Moreover clearly $\vec{w}(\R)=\vec{w}(\cL)$ and $n(\R)=n(\cL)=n$. Therefore we are reduced to showing a string-link analogue of Corollary~\ref{cor:CFHH} :
\begin{prop}\label{prop:stringweak} Suppose $\R=(R,\eta)$ is a string link operator with non-zero winding number  where $R$ is slice (or merely zero in $\C_m^{\frac{1}{n}}$) , then $\R:\C^{\frac{1}{n}}\to \C_m^{\frac{1}{n}}$ is weakly injective.
\end{prop}
\begin{proof}  Suppose $\R(K)=0$. The strategy of the proof is to use parallel copies and fusions to reduce $\R$ to a knot pattern and use Corollary~\ref{cor:CFHH}. 

Since $n(\R)=n$  there is some integral linear  combination $\sum k_iw_i(\R)$ that equals $n$. An example of $\R$ is shown in Figure~\ref{fig:stringlinkfusions} a) where $m=2$, $w_1=2$, $w_2=3$ and $n=1$.  We alter $\R$ as follows.  First we form a new string link by taking $k_i$ parallel untwisted copies of the $i^{th}$ string of $R$ (in the exterior of $\eta$). This means omitting string components for which $k_i=0$ and changing the string orientation of the string if $k_i<0$. Call the resulting string link $R'$ and the resulting operator $\R'=(R',\eta)$. This is carried out for the example in shown in Figure~\ref{fig:stringlinkfusions} b) where we have chosen $k_1=2$ and $k_2=-1$.
\begin{figure}[htbp]
\setlength{\unitlength}{1pt}
\begin{picture}(252,200)
\put(-90,10){\includegraphics{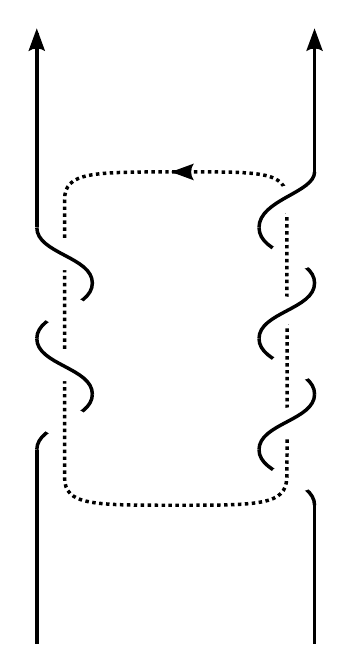}}
\put(70,10){\includegraphics{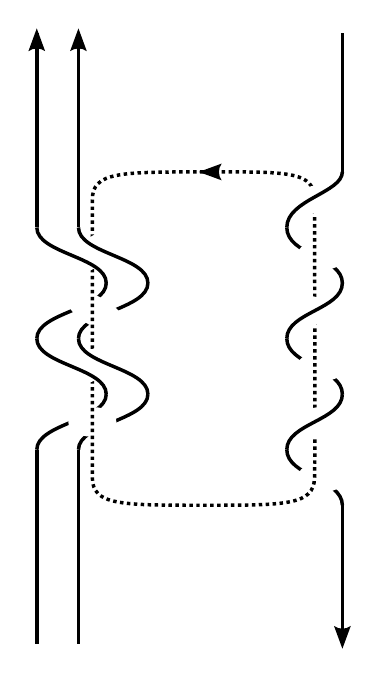}}
\put(240,10){\includegraphics{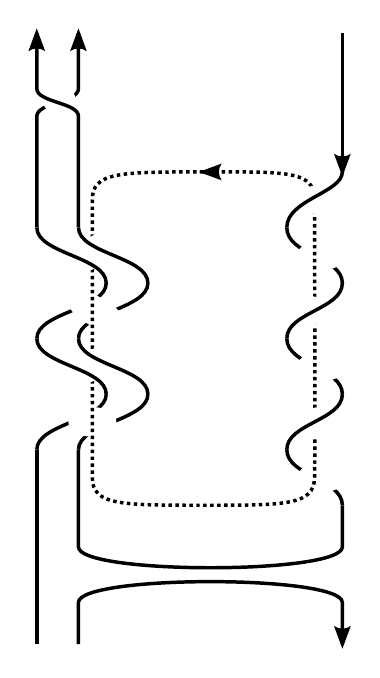}}
\put(-40,48){$\eta$}
\put(126,48){$\eta$}
\put(294,50){$\eta$}
\put(280,0){c) $(T,\eta)$}
\put(95,0){b) $\R'=(R',\eta)$}
\put(-75,0){a) $\R=(R,\eta)$}
\end{picture}
\caption{}
\label{fig:stringlinkfusions}
\end{figure}
This new string link $R'$ will have \textit{total} linking number $n$ with $\eta$. Since by hypothesis $R=0$ in $\C_m^{\frac{1}{n}}$ and since the property of being zero in $\C_m^{\frac{1}{n}}$ is preserved under taking parallels,  we know that $R'=0$.  The two steps $\R\leadsto \R'\leadsto \R'(K)$  ``commute", which means that $\R'(K)$ could also be obtained from $\R(K)$ by taking parallels of components. Since by hypothesis $\R(K)=0$ in $\C_m^{\frac{1}{n}}$ we can also conclude that $\R'(K)=0$. 

Order the components of $R'$. Now we fuse together all the components of $R'$ using bands that do not intersect $\eta$ (fuse component $1$ to component $2$, then component $2$ to component $3$, et cetera), until we arrive at an oriented \textit{tangle}  $(T,\eta)$ whose closure is an oriented \textit{knot} in the exterior of $\eta$. This is shown in Figure~\ref{fig:stringlinkfusions} c) for the example. The closure is not shown. The closure of $T$ represents a (knot) pattern $(P,\eta)$ whose winding number about $\eta$ is exactly $n$.  Moreover, since $R'=0$ and $\R'(K)=0$,  both $R'$ and $\R'(K)$ have closures which are slice in $\Z[\frac{1}{n}]$-balls. Since fusing together components of a slice link yields a slice knot, the closures of $T$ and $T(K)$ are $0$ in $\C^{\frac{1}{n}}$. That is to say, the knot $\widetilde{P}$ as well as the knot $P(K)$ are zero in $\C^{\frac{1}{n}}$. Finally Corollary~\ref{cor:CFHH} applies to show $K=0$. This completes the proof of Proposition~\ref{prop:stringweak}.
\end{proof}

This completes the proof of Theorem~\ref{thm:stringlinkresult}.
\end{proof}

Proposition~\ref{prop:stringweak} can be extended to ordinary links. Suppose $L$ is an ordered oriented link of $m$ components embedded in the solid torus $ST$. Then, exactly as for pattern knots, we can define the link $L(K)$. We get an induced operator
$$
\cL:\C^*\to \C^*_m
$$
where the latter is the set of concordance classes of links in the category $*$. We can define the winding vector and winding number as for string links. Here we can go further and define a \textbf{strong winding number one link operator}  $(L,\eta)$ to be one for which there is some choice of parallels and fusions that take $(L,\eta)$ to a strong winding number one knot pattern $(P,\eta)$.

\begin{prop}\label{prop:linkweak} Suppose $\cL=(L,\eta)$ is a link operator with non-zero winding number  $n$ where $L$ is equivalent to the trivial link in $\C_m^*$ , then $\cL:\C^*\to \C_m^*$ is weakly injective for $*=\frac{1}{n}$. If $\cL$ has strong winding number one then $\cL:\C^*\to \C_m^*$ is weakly injective for $*=top$ and $*=ex$.
\end{prop}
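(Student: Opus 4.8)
The plan is to mimic the proof of Proposition~\ref{prop:stringweak} almost verbatim, reducing the link operator $\cL=(L,\eta)$ to a knot pattern by taking parallels and fusions, and then invoking Corollary~\ref{cor:CFHH} (for $*=\frac{1}{n}$) or Corollary~\ref{cor:CFHHmainstronger2} (for $*=top,ex$). So suppose $\cL(K)=0$ in $\C_m^*$. Since $n(\cL)=n$, write $n=\sum k_iw_i(\cL)$ for integers $k_i$, form the link $L'$ by taking $k_i$ parallel untwisted copies of the $i$th component of $L$ (reversing orientation when $k_i<0$, deleting the component when $k_i=0$), and set $\cL'=(L',\eta)$. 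Since being trivial in $\C_m^*$ is preserved under taking parallel copies, $L'$ is still trivial in the relevant $\C_{m'}^*$; and since the operations $\cL\leadsto\cL'$ and $\cL\leadsto\cL(K)$ commute, $\cL'(K)$ is obtained from $\cL(K)$ by taking parallels, so $\cL'(K)=0$ as well. Now fuse the components of $L'$ together along bands disjoint from $\eta$ until we obtain a knot pattern $(P,\eta)$ of winding number exactly $n$ about $\eta$; since fusing components of a slice link yields a slice knot (and more generally fusing components of a link that is trivial in $\C^*$ yields a knot that is $0$ in $\C^*$), we get $\widetilde{P}=0$ and $P(K)=0$ in $\C^*$. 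For $*=\frac{1}{n}$ this means $\widetilde{P}$ and $P(K)$ are $\Z[\tfrac1n]$-slice, so Corollary~\ref{cor:CFHH} gives $K=0$ in $\C^{\frac1n}$.

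For the strong winding number one case the only extra ingredient is to arrange that the resulting knot pattern $(P,\eta)$ is itself a strong winding number one pattern, and this is exactly what the definition of ``strong winding number one link operator'' builds in: by hypothesis there is some choice of parallels and fusions carrying $(L,\eta)$ to a strong winding number one knot pattern $(P,\eta)$. One must check that this particular choice of parallels is compatible with the linking-number bookkeeping above, i.e. that the $k_i$ used to build $P$ satisfy $\sum k_i w_i=\pm1$; but this is automatic, since the winding number of $P$ about $\eta$ equals $\sum k_i w_i$ and a strong winding number one pattern has ordinary winding number $\pm1$. With the same parallels-and-fusions we again get $\widetilde P=0$ and $P(K)=0$, now in $\C^{ex}$ (respectively $\C^{top}$), and since $\widetilde P$ is pseudo-slice (respectively topologically slice) and $P$ has strong winding number $\pm1$, Corollary~\ref{cor:CFHHmainstronger2} applies directly to the operator $P$ with $P(K)=0=P(0)$, yielding $K=0$ in $\C^*$.

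The step I expect to be the main obstacle is the verification that fusing the components of $L'$ preserves the property ``trivial in $\C_m^*$'' in each of the three categories, and simultaneously that the $\pi_1$-normal-generation hypothesis needed for Corollary~\ref{cor:CFHHmainstronger2} genuinely passes through the fusion. The first point is standard in the classical slice setting (fuse two slice disks along a band to get a slice disk for the fused knot), and the $\Z[\tfrac1n]$- and exotic/topological refinements go through the same way since a band move on a link in the boundary of a $4$-manifold extends to a handle-like modification of the slicing surface without changing the ambient homology or homeomorphism type; but writing this carefully in the $*=ex$ and $*=top$ cases, keeping track of the fact that $\pi_1$ of the slice complement is still normally generated by the meridian $\mu_{\widetilde P}$ (equivalently by $\eta$, using the strong winding number one hypothesis exactly as in the proof of Corollary~\ref{cor:CFHHmainstronger}), requires a little care. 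Everything else is a routine adaptation of Proposition~\ref{prop:stringweak}.
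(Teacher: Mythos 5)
Your proposal is correct and follows essentially the same route as the paper, whose proof of this proposition is simply to rerun the argument of Proposition~\ref{prop:stringweak} (parallels, fusions, reduction to a knot pattern) and additionally invoke Corollary~\ref{cor:CFHHmainstronger2} in the strong winding number one case. The only comment: Corollary~\ref{cor:CFHHmainstronger2} needs no separate $\pi_1$ verification beyond strong winding number $\pm 1$ of the resulting pattern, which the definition of a strong winding number one link operator supplies directly, so the concern in your last paragraph about the normal-generation hypothesis passing through the fusion is unnecessary.
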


\begin{proof} The proof is the same as that of Proposition~\ref{prop:stringweak} except that we also need Corollary~\ref{cor:CFHHmainstronger2}.
\end{proof}

\section{Further Questions}\label{sec:questions}

\noindent\textbf{Questions:}

\begin{itemize}
\item [1.]  Which, if any, ``non-trivial'' winding number zero operators are injective? 

Note that this includes the famous case of the Whitehead double operator. Proposition~\ref{prop:sameoperators}  ensures that \textit{some} winding number zero satellite operators will act trivially on $\C^*$. Namely suppose that $(\widetilde{P},\eta)$ is concordant to  a split link. Then $P(K)=P(U)$ for all $K$.  For example this occurs when $\widetilde{P}$ is a ribbon knot and $\eta$ is a linking circle to one of the ribbon bands. In this case the satellite operator is equivalent to the degenerate satellite operator where the pattern knot is disjoint from a meridional disk of $ST$. This is normally disallowed in the definition of a pattern so it can reasonably be considered to be a ``trivial'' satellite operator.  However it is not easy to place natural a priori conditions on the link $(\widetilde{P},\eta)$ that exclude this possibility. The definition of a ``robust operator'' introduced in ~\cite{CHL5}  gave one such set of conditions. Another was given in ~\cite{Franklin1}. Is Proposition~\ref{prop:sameoperators} the only source of non-injectivity in the smooth category?  

\item [2.]  When do distinct patterns give distinct operators?

Note that if the patterns $P$ and $Q$ give identical functions on $\C^*$ (or $\C$) then $\widetilde{P}=P(U)=Q(U)=\widetilde{Q}$ in $\C^*$ (or $\C$).  Hence if $\widetilde{P}\neq \widetilde{Q}$ then the operators are distinct.  Once again, Proposition~\ref{prop:sameoperators} must be taken into account, so that one ought to perhaps consider operators modulo concordance (of the associated links). 

In particular, how many distinct strong-winding number one operators are there?  Consider the special case that $\widetilde{P}$ is unknotted. We have seen that any two component link with linking number one and each component unknotted corresponds to such a strong-winding number one operator. If such a $2$-component link were concordant to the positive Hopf link (equal in $\C^*$) then Proposition~\ref{prop:sameoperators} shows that the resulting operator would be equal to that of the Hopf link, which is the identity operator. However, as mentioned in Section~\ref{sec:satellitesandwinding}, there are many such links (operators) that are not concordant to the Hopf link as evidenced by several recent papers ~\cite{ChaKimRubSt, FrPo, CFHH}.  Thus there appear to exist a large number of  distinct strong winding number $\pm 1$ operators wherein $\widetilde{P}$ is unknotted. Can it be proved that these are \textit{always} distinct?

B. Franklin has considered two component links, $(R,\eta)$ with $\eta$ unknotted, and proven that, even after \textit{fixing} the first component, $R$, that many (in fact in a precise sense almost all !) choices of $\eta$ lead to \textit{distinct} (winding-number zero) operators ~\cite{Franklin1}\cite[Section 5]{Franklin2}.

\item [3.]  When are  winding number one operators surjective?

S. Akbulut has conjectured that there exists a winding number $1$ operator $P$ for which $0$ is not in the image of 
$P:\C\to \C$ ~\cite[Problem 1.45]{Kirbyproblemlist}. By contrast, it is clear that winding number zero operators are not surjective since in this case, for example, $P(K)$ is the same as $\widetilde{P}$ in the algebraic knot concordance group. The image of the Whitehead double operator consists entirely of knots that are topologically slice.

\end{itemize}

\bibliographystyle{plain}
\bibliography{mybib4_30_2012}

\end{document}